\documentclass[10pt,twoside]{amsart}
\usepackage{amssymb, amsmath, mathrsfs, mathtools, amsfonts, amsthm, amscd, yfonts}
\usepackage{newunicodechar}
\usepackage{xcolor}
\usepackage{hyperref} 
\usepackage{multicol}

\theoremstyle{plain}
\newtheorem{Theorem}{Theorem}

\newtheorem{Proposition}[Theorem]{Proposition}
\newtheorem{Lemma}[Theorem]{Lemma}
\newtheorem{Corollary}[Theorem]{Corollary}

\newunicodechar{−}{-}
\theoremstyle{definition}
\newtheorem{Definition}[Theorem]{Definition}
\theoremstyle{remark}
\newtheorem{Remark}[Theorem]{Remark}
\newtheorem{Example}[Theorem]{Example}

\title[Regular gradings]{On infinite dimensional algebras with regular gradings}
\author[L. Centrone]{Lucio Centrone}
\address{Dipartimento di Matematica, Universit\`a degli Studi di Bari Aldo Moro, Via Edoardo Orabona, 4, 70125 Bari, Italy}
\email{lucio.centrone@uniba.it}
\thanks{L. Centrone was partially supported by PNRR-MUR PE0000023-NQSTI}
\author[P. Koshlukov]{Plamen Koshlukov}
\address{IMECC, UNICAMP, Rua S\'ergio Buarque de Holanda 651, 13083-859 Campinas, SP, Brazil}
\email{plamen@unicamp.br}
\thanks{P. Koshlukov was partially supported by FAPESP Grant 2024/14914−9, and CNPq Grant 307184/2023-4}
\author[K. Pereira]{Kau\^e Pereira}
\address{IMECC, UNICAMP, Rua S\'ergio Buarque de Holanda 651, 13083-859 Campinas, SP, Brazil}
\email{k200608@dac.unicamp.br}
\thanks{K. Pereira was supported by FAPESP Grant 2023/01673-0, and FAPESP Grant 2025/03763-2}

\subjclass[2020]{16R10, 16R50, 16W55, 16T05}

\keywords{Regular decomposition; regular gradings; graded algebra; polynomial identities}
\begin{document}

\begin{abstract}
Let $G$ be a finite abelian group and let $K$ be an algebraically closed field of characteristic 0. We consider associative unital algebras $A$ over $K$ graded by $G$, that is $A=\oplus_{g\in G} A_g$, where the vector subspaces $A_g$ satisfy $A_gA_h\subseteq A_{g+h}$ for every $g$, $h\in G$. Such a $G$-grading is called regular whenever for every $n$-tuple $(g_1,\ldots,g_n)\in G^n$ there exist homogeneous elements $a_i\in A_{g_i}$ such that $a_1\cdots a_n\ne 0$ in $A$; furthermore, for every $g$, $h\in G$ and every $a_g\in A_g$, $a_h\in A_h$ one has $a_ga_h=\beta(g,h)a_ha_g$ for some $\beta(g,h)\in K^*$. Here $\beta(g,h)$ depends only on $g$ and $h$ but not on the elements $a_g$ and $a_h$. It is immediate that $\beta$ is a skew-symmetric bicharacter on $G$. The regular decomposition above is minimal if whenever $\beta(g,h)=\beta(g,k)$ for every $g\in G$, then $h=k$. In this paper we characterize the generators of the graded variety generated by the natural $\mathbb{Z}_{2}$-grading on the Grassmann algebra in terms of $\mathbb{Z}_{2}$-graded regular algebras with minimal regular decomposition. Furthermore we describe the finitely generated subalgebras of a $\mathbb{Z}_2$-graded regular algebra having a minimal regular decomposition. 

We recall that regular gradings and the corresponding decompositions play an important role in the description of numerical invariants of PI algebras as proved in the papers \cite{regevz2, bcommutation, bahturin2009graded, Eli1}.
\end{abstract}

\maketitle

\section*{Introduction}
Group gradings on algebras appeared long ago in Commutative Algebra: the polynomial algebras are naturally graded by the nonnegative integers. Moreover, finite dimensional simple Lie algebras are naturally graded by their roots. The notion of a Lie or Jordan superalgebra involves a grading by the cyclic group of order 2, $\mathbb{Z}_2$. In 1963 Wall \cite{wall} classified the finite dimensional simple associative $\mathbb{Z}_2$-graded algebras (here simple means that there are no nontrivial homogeneous ideals). Considering algebras with some additional structure, like grading, or involution, allows one to study suitable components of the algebra (homogeneous components in the case of gradings, or symmetric and skew elements in case of involution), and then reconstruct information about the whole algebra. 

In the theory of algebras with polynomial identities (PI theory), the systematic study of gradings and the corresponding graded identities started in the eighties. Kemer developed powerful tools that led him to the classification of the ideals of identities (also called T-ideals) of associative algebras in characteristic 0. As a consequence, he solved in the affirmative a long standing problem proposed by W. Specht in 1950: is every T-ideal in characteristic 0 finitely generated as a T-ideal? One of the main tools in Kemer's theory was the usage of $\mathbb{Z}_2$-gradings and the corresponding graded identities. For an account of Kemer's theory the reader can see for example \cite{kemer.1, kemer.2}, or the recent monograph \cite{Procesietalbook}. Kemer's results provided further motivation for studying group gradings and their graded identities. We refer the interested reader to the monograph \cite{elduque}. Although it concerns group gradings on Lie algebras it contains a good survey of the state-of-the-art in the associative case. Another very important tool in Kemer's theory is the Grassmann envelope of a $\mathbb{Z}_2$-graded algebra. If $A=A_0\oplus A_1$ is $\mathbb{Z}_2$-graded then its Grassmann envelope is $E(A)=A_0\otimes E_0\oplus A_1\otimes E_1$. Here $E$ stands for the Grassmann (or exterior) algebra of an infinite dimensional vector space $V$ with a basis $e_1$, $e_2$, \dots; then the elements 1 and $e_{i_1}e_{i_2}\cdots e_{i_k}$, $k\ge 1$, $i_1<i_2<\cdots<i_k$, form a basis of $E$. The multiplication in $E$ is defined by $e_ie_j=-e_je_i$ for all $i$ and $j$. Then $E_0$ is the span of 1 and all monomials as above, where $k$ is even; $E_1$ is the span of the monomials of odd length. Recall that $E_0$ is the centre of $E$, and $E_1$ is the "anti-commuting" part. We recall here why the Grassmann envelope is important. According to Kemer's theory, every PI algebra $A$ in characteristic 0 is PI equivalent to (that is  satisfies the same identities as) $E(B)$ for some finitely generated $\mathbb{Z}_2$-graded algebra $B$. Furthermore, as shown by Kemer, every finitely generated PI-algebra is PI-equivalent to a finite dimensional algebra. 
From now on we assume that the field $K$ is of characteristic 0. 
The notion of a regular grading was introduced by Regev and Seeman \cite{regevz2}. They considered a more general definition, which we recall here. Let $G$ be a finite group and let $A=\oplus_{g\in G}A_g$ be a $G$-graded algebra over the field $K$. We assume here $K$ is of characteristic 0. This grading is regular whenever two conditions are met. For every $n$ and for every $n$-tuple $(g_1,\ldots, g_n)$ of elements of $G$ there exist homogeneous elements $a_1\in A_{g_1}$, \dots, $a_n\in A_{g_n}$ such that $a_1\cdots a_n\ne 0$. Moreover, if $a_g\in A_g$, $a_h\in A_h$ then $a_ga_h=\beta(g,h) a_ha_g$. Here $\beta(g,h)\in K$ is a constant that depends only on $g$ and $h$ but not on the choice of $a_g$, $a_h$. The matrix $M=(\beta(g,h))$ of order $|G|$ is the matrix of the regular decomposition. It is immediate to see that the associativity of $A$ implies $\beta\colon G\times G\to K^{\ast}$ is a skew-symmetric bicharacter of $G$. Regev and Seeman proved that two $G$-graded algebras with regular decompositions are PI equivalent if and only if their matrices of the regular decompositions are conjugate via a permutation matrix. Later on, in \cite{bahturin2005finite, bahturin2009graded} the authors introduced the notion of a minimal regular decomposition. Informally speaking a regular decomposition is minimal whenever its matrix has no two identical columns. In the language of the $\beta(g,h)$ this is translated as follows. If for some $g$ and $h$ one has $\beta(g,t)=\beta(h,t)$ for all $t\in G$ then $g=h$. 

The matrix algebra $M_n(K)$ admits a natural $\mathbb{Z}_n\times \mathbb{Z}_n$-grading. It defines a minimal regular decomposition of $M_n(K)$. In \cite{bcommutation} the authors computed the matrix of this decomposition, its determinant equals $\pm n^{n^2}$. Hence in \cite{bahturin2009graded} the following conjecture was proposed: the decomposition matrix of a regular grading is invertible if and only if the grading is minimal. The authors of \cite{bcommutation} asked if the number of direct summands in the decomposition of a minimal regular grading, and also the determinant of the corresponding matrix, are invariants of the algebra (that is these do not depend on the minimal regular decomposition but on the algebra only). All these conjectures were confirmed by Aljadeff and David in \cite{Eli1}. In fact in \cite{Eli1} it was proved that the determinant of the matrix in question equals $\pm |G|^{|G|/2}$, and moreover that the PI exponent of the algebra equals $|G|$. Here we recall that the PI exponent of a PI algebra $A$ is one of the principal numerical invariants of the identities satisfied by $A$. We refer the reader to \cite{GZbook} for the definition of the codimension sequence of a PI algebra $A$, its PI-exponent and related topics.

In the present paper we continue the research initiated in \cite{LPK.1}. We assume that the base field $K$ is algebraically closed and of characteristic $0$, and we consider the group $G$ as the group $\mathbb{Z}_2$. We study minimal regular decompositions of associative unital algebras graded by $\mathbb{Z}_2$. The natural grading on the Grassmann algebra $E$ is a regular minimal one. Let $A$ admit a regular minimal $\mathbb{Z}_2$-grading. Then it turns out that $A$ and $E$ satisfy the same graded identities if and only if $\dim A=\infty$. These conditions are shown to be equivalent to $\beta(0,0)=\beta(0,1)=\beta(1,0)=1$, and $\beta(1,1)=-1$. The main objective of this paper is to study the variety of graded algebras defined by the natural $\mathbb{Z}_2$-grading on the Grassmann algebra in terms of the $\mathbb{Z}_{2}$-graded regular algebras having a minimal regular decomposition.  Given a $\mathbb{Z}_2$-graded algebra $A$ with a minimal regular decomposition, by Kemer's results we know that there is a  $\mathbb{Z}_2$-graded algebra $C$ such that the algebras $E(C)$, $A$, and $E$ satisfy the same graded identities. We prove that in our situation the finite dimensional algebra $C$ must be commutative. The structure of $C$ is described by using its Wedderburn--Malcev decomposition. We then describe the $\mathbb{Z}_2$-graded algebras $A$ that satisfy the same graded identities as $E$. If $A=E(C)$ as above, we prove that $A$ satisfies the same graded identities as $E$ if and only if $C$ is regular and commutative; another equivalent condition is obtained in terms of the Wedderburn--Malcev decomposition of $C$. Finally, we classify the finitely generated homogeneous subalgebras of a $\mathbb{Z}_2$-graded algebra $A$ with minimal regular decomposition and use this classification to show that every such algebra is a direct limit of ``almost'' regular algebras.
We think that our results will shed additional light on the behavior of the numerical invariants of PI-algebras and on the combinatorial relations among these invariants, in the context of regular gradings.

\section{Background}
Throughout we assume that $G$ is a finite abelian group, $X=\{x_{1}, x_2,\ldots\}$ is an infinite countable set of noncommuting variables and $K$ is an algebraically closed field of characteristic $0$. A $K$-algebra $A$ means an associative unital algebra over the field $K$, and we denote by 1 its unit.

\subsection{Basic definitions concerning graded polynomial identities}
Let $A$ be a $K$-algebra and let $\{A_g\mid g\in G\}$ be vector subspaces of $A$ such that $A=\oplus_{g\in G} A_g$. This decomposition of $A$ is a \textit{$G$-grading} whenever $A_gA_h\subseteq A_{gh}$ for every $g$, $h\in G$. The elements $0\ne a\in A_g$ are called \textsl{homogeneous} elements of degree $g$. Given a subspace $V\subseteq A$ we say $V$ is a \textit{graded subspace} (or \textit{homogeneous subspace}) if $V=\bigoplus_{g\in G} V\cap A_{g}$.  The notions of a graded subalgebra and a graded ideal of $A$ are defined in the natural way.

Split $X=\cup_{g\in G} X_g$ into a disjoint union of infinite sets, $X_g=\{x_1^{(g)}, x_2^{(g)}, \ldots\}$. We define a $G$-grading on the free associative algebra $K\langle X\rangle$ as follows. Declare the variables from $X_g$ homogeneous of degree $g$, and then extend this to the monomials in $K\langle X\rangle$ in the natural way.
We denote the resulting $G$-graded algebra by $K\langle X_{G}\rangle$; it is the \textit{free} $G$-graded algebra. A polynomial 
\[
f=f(x_{1}^{(g_{1})},\ldots, x_{k_{1}}^{(g_{1})},x_{1}^{(g_{2})},\ldots, x_{k_{2}}^{(g_{2})},\ldots, x_{1}^{(g_{m})},\ldots, x_{k_{m}}^{(g_{m})})\in K\langle X_{G}\rangle
\]
is a \textsl{graded polynomial identity} for the $G$-graded algebra $R$,  if for every homogeneous elements $r_{i}^{(g_{i})}$, \dots, $r_{k_{i}}^{(g_{i})}\in R_{g_{i}}$, $1\leq i\leq m$,  we have
\[
f(r_{1}^{(g_{1})},\ldots, r_{k_{1}}^{(g_{1})},r_{1}^{(g_{2})},\ldots, r_{k_{2}}^{(g_{2})},\ldots, r_{1}^{(g_{m})},\ldots, r_{k_{m}}^{(g_{m})})=0.
\]    
The set of all graded identities for $R$ is denoted by $T_{G}(R)$. It can be shown that $T_{G}(R)$ is a graded ideal of $K\langle X_{G}\rangle$, moreover it is stable under every graded endomorphism of $K\langle X_{G}\rangle$. It is called the $T_{G}$-ideal of $R$. 

\begin{Definition} Let $R$ be a $G$-graded algebra. The graded variety generated by $R$, denoted by $\operatorname{var}^{G}(R)$, is the class of all $G$-graded algebras $B$ such that $T_{G}(R)\subseteq T_{G}(B)$. 

Moreover, if $U\in \operatorname{var}^{G}(R)$ is such that $T_{G}(U)=T_{G}(R)$, then we say that $U$ generates the variety $\operatorname{var}^{G}(R)$.
\end{Definition}
For a more general definition of a variety of algebras, definition of the ordinary PI-exponent, the graded PI-exponent, as well as for fundamental theorems in the theory of PI-algebras and graded PI-algebras, we refer the reader to the monographs \cite{drensky2000free} and \cite{GZbook}. In the next section we will use the notions of direct systems and direct limits; we refer the reader to  \cite{rotman2009introduction} and \cite{weibel1994introduction} for further information about these notions.

\subsection{Basic notions on regular gradings}
 
 We start with the main definition (see \cite{bahturin2009graded}). 

\begin{Definition}
\label{def.regular.gradings}
Let $A$ be a $G$-graded algebra. Then $A$ is called a $G$-graded \emph{regular} algebra (or $A$ has a regular grading with respect to $G$), with regular decomposition given by the $G$-grading, if the following conditions hold:
\begin{enumerate}
    \item[(i)] Given $n\in \mathbb{N}$ and an $n$-tuple $(g_{1},\ldots, g_{n})\in G^{n}$, there exist homogeneous elements $a_{1}\in A_{g_{1}}$, \dots, $a_{n}\in A_{g_{n}}$ such that $a_{1}\cdots a_{n}\neq 0$.
     \item[(ii)] For every $g$, $h\in G$ and for every $a_{g}\in A_{g}$, $a_{h}\in A_{h}$, there exists $\beta(g,h)\in K^{\ast}$ satisfying
     \[
     a_{g}a_{h}=\beta(g,h)a_{h}a_{g}.
     \]
\end{enumerate}
Moreover, we define the \emph{regular decomposition matrix} associated with the regular decomposition
of the $G$-graded algebra $A$ as $M^{A}=(\beta(g,h))_{g,h}$.
\end{Definition}
Note that $M^A$ is a square matrix of order $|G|$. 

\begin{Remark} Given a $G$-graded algebra $A$, we define the support of $A$ by
\[
\operatorname{Supp}_{G}(A)=\{g\in G\mid A_{g}\neq 0\}.
\]
We observe that if $A$ is a $G$-graded regular algebra, then by the second condition of regularity we have $\operatorname{Supp}_{G}(A)=G$.
    
\end{Remark}

Since the product in $A$ is associative, then the function $\beta\colon G\times G\rightarrow K^{\ast}$ must be a skew-symmetric bicharacter in the following sense: for every $g$, $h$ and $s\in G$ we have
\[
\beta(g,h)=\beta(h,g)^{-1},\quad \beta(g+s,h)=\beta(g,h)\beta(s,h),\quad \text{and}\quad\beta(g,s+h)=\beta(g,s)\beta(g,h).
\]
Recall that a skew-symmetric bicharacter $\theta\colon G\times G\rightarrow K^{\ast}$ is trivial, if $\theta(g,h)=1$, for every $g$, $h\in G$. 
\begin{Example} The following are examples of regular gradings.
\begin{enumerate}
    \item  The commutative polynomial algebra $K[x]$ becomes an infinite dimensional  $\mathbb{Z}_n$-graded regular algebra, for $n \in \mathbb{N}$, with the trivial skew-symmetric bicharacter, by setting
\[
K[x] = \oplus_{0 \leq i \leq n-1} x^i K[x^n]
\]
(see \cite[Example 5]{LPK.1}).
\item Given a cocycle $\alpha\in H^{2}(G,K^{\ast})$, the twisted group algebra $K^{\alpha}G$ is a finite dimensional $G$-graded regular algebra with skew-symmetric bicharacter given by $\beta(g,h)=\alpha(g,h)\alpha(h,g)^{-1}$, for every $g$, $h\in G$ (see \cite[Example 8]{LPK.1}).

\item The infinite dimensional  Grassmann algebra $E$, with its natural $\mathbb{Z}_{2}$-grading $E=E_{0}\oplus E_{1}$, is an infinite dimensional  $\mathbb{Z}_{2}$-graded regular algebra with skew-symmetric bicharacter given by 
\[
\beta(0,0)=\beta(0,1)=\beta(1,0)=1,\quad \text{and}\quad \beta(1,1)=-1
\]
(see \cite[Example 6]{LPK.1}).
\end{enumerate}
\end{Example}
For further examples see \cite{LPK.1}.

\begin{Definition} Let $A$ be a $G$-graded regular algebra with skew-symmetric bicharacter $\beta\colon G\times G\rightarrow K^{\ast}$. The regular decomposition of $A$ is \textsl{nonminimal} if there exist $h$, $k\in G$ with $h\neq k$ such that $\beta(g,h)=\beta(g,k)$, for every $g\in G$. Otherwise we say the regular decomposition of $A$ is minimal.
\end{Definition}

Bahturin and Regev conjectured in \cite[Conjecture 2.5]{bahturin2009graded} that if $A$ is a $G$-graded regular algebra, then the regular decomposition of $A$ is minimal if and only if $\det M^{A} \neq 0$. Aljadeff and David, in \cite[Theorem 7]{Eli1}, gave a positive answer to this conjecture when the ground field is algebraically closed of characteristic $0$. More recently, in \cite{LPK.1}, we proved that the conjecture raised by Bahturin and Regev fails over fields of characteristic $p>2$. 

Since $K$ is an algebraically closed field of characteristic $0$, \cite[Theorem 7]{Eli1} gives us a powerful tool for determining whether the regular decomposition of a regular grading is minimal. For instance, the regular decomposition of the Grassmann algebra $E$ with its natural $\mathbb{Z}_{2}$-grading is minimal, because
\[
\det M^{E}=\det\begin{pmatrix}
    1 & 1\\
    1 & -1
\end{pmatrix}=-2\neq 0.
\]
It follows from \cite[Theorem 7]{Eli1} that if $B$ is a $G$-graded regular algebra with a minimal regular decomposition, then $\exp(B) = |G|$. Here $\exp(B)$ stands for the PI exponent of the PI algebra $B$, see for example \cite{GZbook}.

Finite dimensional regular gradings are closely related to twisted group algebras. Namely, it was shown in \cite[Theorem 48]{LPK.2} that if $A$ is a finite dimensional $G$-graded regular algebra such that the product of all its graded simple components with $J(A)$ is nonzero, then there exist a commutative algebra $W(A)$, a $2$-cocycle $\alpha \in H^2(G,K^\ast)$, and a $G$-graded algebra $\mathscr{W}$, which is not regular, such that $A$ is graded isomorphic to $(K^\alpha G \otimes W(A))\oplus \mathscr{W}$, and $J(W(A))\oplus J(\mathscr{W})_{0}=J(A_{0})$. Here, $W(A)$ is considered with the trivial grading, and the grading on $K^\alpha G \otimes W(A)$ is given by $(K^\alpha G \otimes W(A))_g = (K^\alpha G)_g \otimes W(A)$, for every $g \in G$. We note that the general case, without the assumption that the product of all graded simple components with the Jacobson radical of $A$ is nonzero, was also considered in \cite[Theorem 50]{LPK.2}. Here, $J(R)$ denotes the Jacobson radical of an algebra $R$.

As a consequence of this result, it follows that in the finite dimensional case the converse of \cite[Theorem 7]{Eli1} is valid, thereby yielding the following characterization.

\begin{Theorem}\cite[Theorem 50]{LPK.2} Let $A$ be a finite dimensional $G$-graded regular algebra with skew-symmetric bicharacter $\beta$. Then the regular decomposition of $A$ is minimal if and only if $\exp(A)=|G|$. Equivalently, $\det M^{A}\neq 0$ if and only if $\exp(A)=|G|$.

\end{Theorem}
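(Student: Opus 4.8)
The plan is to derive the statement from results already available, isolating the single implication that needs real work. Since $K$ is algebraically closed of characteristic $0$, the equivalence ``the regular decomposition is minimal $\iff \det M^{A}\neq 0$'' is exactly \cite[Theorem 7]{Eli1}, and the implication ``minimal $\Rightarrow \exp(A)=|G|$'' has already been recorded above as a consequence of that theorem. Thus it suffices to prove the converse $\exp(A)=|G|\Rightarrow$ minimal, and I would do so by contraposition, showing that a non-minimal decomposition forces $\exp(A)<|G|$. The first step is to rephrase non-minimality through the radical of the bicharacter $H=\{g\in G : \beta(g,h)=1 \text{ for all } h\in G\}$: two columns of $M^{A}$ coincide precisely when their indices differ by an element of $H$, so the decomposition is non-minimal if and only if $H\neq\{0\}$, equivalently $\beta$ is degenerate, equivalently $\det M^{A}=0$.

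The second step is to apply the structure theorem \cite[Theorem 48]{LPK.2}, giving a graded isomorphism $A\cong (K^{\alpha}G\otimes W(A))\oplus \mathscr{W}$ with $W(A)$ commutative and trivially graded and $\mathscr{W}$ the non-regular remainder, and then to compute the exponent of the twisted group algebra $K^{\alpha}G$. Over an algebraically closed field of characteristic $0$ the algebra $K^{\alpha}G$ of a finite abelian group is semisimple, and $\beta$ descends to a non-degenerate bicharacter on $G/H$, whose order $[G:H]$ is consequently a perfect square; hence $K^{\alpha}G\cong M_{n}(K)^{\oplus |H|}$, where all projective irreducible representations share the common dimension $n=\sqrt{[G:H]}$ and there are $|H|$ of them (the count $|H|\cdot n^{2}=|H|\cdot[G:H]=|G|$ checks out). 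As distinct simple components of a semisimple algebra are unconnected by the radical, the Giambruno--Zaicev description of the PI exponent \cite{GZbook} yields $\exp(K^{\alpha}G)=n^{2}=|G|/|H|$.

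The third step is to transport this computation through the tensor factor $W(A)$ and across the decomposition. Writing $W(A)=K\oplus J(W(A))$ with $J(W(A))$ nilpotent and commutative, the primitive idempotents of $W(A)$ are central and block-diagonal, while the matrix blocks of $K^{\alpha}G$ annihilate one another; hence $J(K^{\alpha}G\otimes W(A))=K^{\alpha}G\otimes J(W(A))$ links no two distinct simple components, and the exponent of the regular summand equals its largest simple-block dimension $|G|/|H|$. It then remains to verify that the non-regular summand $\mathscr{W}$ neither raises the exponent nor connects, through the radical of $A$, to the blocks of the regular part, so that $\exp(A)=|G|/|H|$. Combining with the first step closes the argument: non-minimality means $H\neq\{0\}$, hence $\exp(A)=|G|/|H|<|G|$, while minimality gives $H=\{0\}$ and $\exp(A)=|G|$; equivalently $\det M^{A}\neq 0$ if and only if $\exp(A)=|G|$.

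The main obstacle I expect is exactly this control of $\mathscr{W}$ in the third step. One must exploit the precise output of \cite[Theorem 48]{LPK.2} -- notably the relation $J(W(A))\oplus J(\mathscr{W})_{0}=J(A_{0})$ and the fact that $\mathscr{W}$ carries no regular structure -- to show that the simple components coming from $\mathscr{W}$ are dominated in dimension by the blocks of $K^{\alpha}G\otimes W(A)$ and are not joined to them by $J(A)$, so that the Giambruno--Zaicev maximum over chains of connected simple components is attained on the regular summand. Once this is settled, the reduction to $K^{\alpha}G$, the block decomposition of the twisted group algebra, and the radical bookkeeping inside the regular part are all routine.
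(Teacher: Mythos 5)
Your skeleton is the same one the paper indicates for this theorem: the statement is quoted from \cite{LPK.2} and presented there as a consequence of the structure theorem \cite[Theorem 48]{LPK.2} combined with \cite[Theorem 7]{Eli1}, and your first two steps are sound (the reduction to the contrapositive via \cite[Theorem 7]{Eli1}; the identification of non-minimality with $H=\{g : \beta(g,h)=1 \ \forall h\}\neq\{0\}$; the block structure $K^{\alpha}G\cong M_{n}(K)^{\oplus|H|}$ with $n^{2}=[G:H]$; and the observation that the primitive idempotents of the commutative factor $W(A)$ prevent radical links inside $K^{\alpha}G\otimes W(A)$, so that summand has exponent $|G|/|H|$). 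The genuine gap is exactly the one you flagged in your third step, and it is not a routine verification in the framework you set up: since $\exp$ of an algebra direct sum is the maximum of the exponents of the summands, your claim $\exp(A)=|G|/|H|$ needs the bound $\exp(\mathscr{W})\leq |G|/|H|$, and nothing you have supplies it. (The other half of your worry is vacuous: in a direct sum of ideals the two summands annihilate each other, so no radical path can join their simple components.) The only a priori handle on $\mathscr{W}$ is that it is a $\beta$-commutative, non-regular summand of $A$; the natural inequality $\exp(\mathscr{W})\leq\exp(A)$, coming from the fact that identities pass to graded subalgebras, is useless here because $\exp(A)$ is precisely what you are computing --- the argument is circular as structured, and "dimension domination" of the blocks of $\mathscr{W}$ by those of $K^{\alpha}G$ is not something the structure theorem gives you.

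The gap can be closed, but by a different mechanism which makes the entire summand-by-summand analysis unnecessary, and which the paper itself uses in Section 2: by \cite[Lemma 31]{Eli1} (equivalently, the Regev--Seeman theorem quoted in the introduction), two $G$-graded regular algebras with equal regular decomposition matrices satisfy the same graded identities, hence the same ordinary identities and the same exponent. The twisted group algebra $K^{\alpha}G$ produced by \cite[Theorem 48]{LPK.2} (or, independently of that theorem, by the classical fact that over an algebraically closed field every skew-symmetric bicharacter of a finite abelian group arises from a $2$-cocycle) is itself a regular $G$-graded algebra whose bicharacter is exactly $\beta$, so $M^{K^{\alpha}G}=M^{A}$, whence $T_{G}(A)=T_{G}(K^{\alpha}G)$ and $\exp(A)=\exp(K^{\alpha}G)=[G:H]$, with no control of $\mathscr{W}$ needed at all. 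Non-minimality then gives $H\neq\{0\}$ and $\exp(A)=|G|/|H|<|G|$, which is your contrapositive; minimality gives $H=\{0\}$ and $\exp(A)=|G|$, recovering both stated equivalences.
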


\section{\texorpdfstring{On the structure of infinite dimensional  $\mathbb{Z}_{2}$-graded regular algebras}{On structure of infinite dimensional  Z{2}-graded regular algebras}}

\subsection{On certain graded subalgebras of regular algebras}

The next lemma provides a necessary condition for a graded algebra with a regular grading to be finite dimensional.
\begin{Lemma}\label{infinite} Let $A$ be a $G$-graded regular algebra with a bicharacter $\beta$. If there exists $h\in G$ such that $\beta(h,h)=-1$, then $\dim (A)=\infty$.
\end{Lemma}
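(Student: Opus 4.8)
The plan is to exploit the anticommutativity forced by $\beta(h,h)=-1$ together with the regularity condition (i) to produce, inside a hypothetically finite-dimensional $A$, a nonzero product that is simultaneously forced to vanish.

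First I would record the elementary consequence of $\beta(h,h)=-1$: for any homogeneous $a,b\in A_h$ one has $ab=-ba$, and taking $a=b$ yields $2a^2=0$, hence $a^2=0$ since $\operatorname{char}K=0$. Thus $A_h$ behaves exactly like a space of pairwise anticommuting, square-zero elements, which is the structural input driving the whole argument.

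Next I would argue by contradiction. Suppose $\dim A<\infty$; then $d:=\dim A_h$ is finite, and I fix a basis $v_1,\ldots,v_d$ of $A_h$. Applying condition (i) to the $(d+1)$-tuple $(h,h,\ldots,h)$ produces homogeneous elements $a_1,\ldots,a_{d+1}\in A_h$ with $a_1\cdots a_{d+1}\neq 0$. Expanding each $a_i$ in the basis $v_1,\ldots,v_d$, the product $a_1\cdots a_{d+1}$ becomes a $K$-linear combination of monomials $v_{j_1}\cdots v_{j_{d+1}}$ with all indices in $\{1,\ldots,d\}$. By the pigeonhole principle every such monomial has a repeated index, since it has $d+1$ factors drawn from only $d$ basis vectors. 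Bringing the two equal factors together using $v_iv_j=-v_jv_i$ (at the cost of a sign) and then invoking $v_i^2=0$, I conclude that each monomial vanishes, whence $a_1\cdots a_{d+1}=0$. This contradicts the choice of the $a_i$, so $\dim A=\infty$.

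I expect the only step requiring care to be the last one, namely justifying that a repeated index annihilates the monomial: this rests on the $v_i$ pairwise anticommuting and squaring to zero, which is exactly what $\beta(h,h)=-1$ guarantees. Everything else is routine bookkeeping with the multilinear expansion and the count of factors versus the dimension of $A_h$.
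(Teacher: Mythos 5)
Your proof is correct, and it takes a genuinely different route from the paper's. The paper argues via structure theory: assuming $\dim A<\infty$, it invokes a result from a companion paper (\cite[Proposition 26]{LPK.2}) producing a graded subalgebra $D\subseteq A$ isomorphic to a twisted group algebra $K^{\sigma}G$; then for $0\neq X_h\in D_h$ the relation $\beta(h,h)=-1$ forces $X_h^2=0$, while the twisted group algebra structure forces $X_h^2=\sigma(h,h)X_{2h}\neq 0$, a contradiction. Your argument instead stays entirely inside the definition of regularity: from $\beta(h,h)=-1$ you get that all elements of $A_h$ pairwise anticommute and square to zero, and then the pigeonhole principle applied to a product of $d+1$ elements (guaranteed nonzero by condition (i)) expanded over a basis of the $d$-dimensional space $A_h$ yields the contradiction. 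Each step checks out: the multilinear expansion, the repeated index in every monomial, and the sign-manipulation reducing each monomial to a factor $v_i^2=0$ are all valid. What each approach buys: the paper's proof is a one-liner given the imported structural theorem, and it fits the broader program of relating finite-dimensional regular algebras to twisted group algebras; your proof is self-contained, requires no machinery beyond Definition \ref{def.regular.gradings}, and in fact works over any field of characteristic different from $2$ (only $2a^2=0\Rightarrow a^2=0$ is needed), whereas the cited structural result relies on the standing hypotheses of algebraic closure and characteristic $0$. So your argument is both more elementary and slightly more general.
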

\begin{proof}
    Suppose $\dim A<\infty$. It follows from \cite[Proposition 26]{LPK.2} that there exists a graded subalgebra $D\subseteq A$ such that $D$ is graded isomorphic to  the twisted group algebra $K^{\sigma}G$, where $\sigma\in H^{2}(G,K^{\ast})$ is a 2-cocycle. Notice that if $0\neq X_{h}\in D_{h}$, then $X_{h}^{2}=X_{h}X_{h}=-X_{h}X_{h}=-X_{h}^{2}$, and thus $X_{h}^{2}=0$. It follows that
    \[
    0=X_{h}^{2}=X_{h}X_{h}=\sigma(h,h)X_{2h}\neq 0
    \]
    which is a contradiction. 
\end{proof}

The converse of the lemma is not always true, as shown in the following example.

\begin{Example}  
Consider the polynomial algebra in two commutative variables $K[x,t]$. Let $I$ be the ideal of $K[x,t]$ generated by the polynomial $t^{2}-1$. In the algebra $A := K[x,t]/I$, we write a coset $\overline{a}$ simply as $a$. Thus, $A$ is $\mathbb{Z}_{2}$-graded by setting  
\[
A_{0} := K[x],\quad A_{1} := tK[x].
\]  
Since $t^{2} = 1$ in $A$, it follows that $A$ is a $\mathbb{Z}_{2}$-graded regular algebra with a trivial skew-symmetric bicharacter (because it is a commutative algebra) and $\dim A = \infty$.  
\end{Example}

Now, let $A$ be a $\mathbb{Z}_{2}$-graded regular algebra with a skew-symmetric bicharacter $\beta$, whose regular decomposition is minimal. Observe that \[ \beta(0,1) = \beta(0+0,1) = \beta(0,1) \beta(0,1), \]
which implies that $\beta(0,1) = 1$. Similarly, it can be verified that $\beta(0,0) = \beta(1,0) = \beta(0,1) = 1$, which in turn implies that  the minimality of the decomposition determines $\beta(1,1)$. Since $\beta(1,1) \in \{\pm 1\}$, and $\det M^{A}\neq 0$, we conclude that $\beta(1,1) = -1$. In particular, by Lemma \ref{infinite} we have $\dim A=\infty$. Additionally, since $M^{A} = M^{E}$, it follows from \cite[Lemma 31]{Eli1} that $T_{\mathbb{Z}_{2}}(A) = T_{\mathbb{Z}_{2}}(E)$.  
Such an analysis shows that if the regular decomposition of a $\mathbb{Z}_{2}$-graded algebra is minimal, then its skew-symmetric bicharacter is automatically determined. Moreover, this skew-symmetric bicharacter is precisely the one associated with the regular $\mathbb{Z}_2$-grading of the Grassmann algebra $E$ (with respect to its natural grading). Furthermore, $\dim A=\infty$ and $A$ satisfies the same graded identities as $E$. Recall that the latter were completely described in \cite{giambruno2001polynomial}. We summarize all these remarks in the following.

\begin{Lemma}  
\label{remark.superalgebra}
Let $A$ be a $\mathbb{Z}_{2}$-graded algebra with a regular grading determined by the skew-symmetric bicharacter $\beta$, whose corresponding regular decomposition is minimal. Then, the following statements hold:
\begin{enumerate}
    \item $\beta$ is such that
    \[
    \beta(0,0)=\beta(0,1)=\beta(1,0)=1,\quad \text{and}\quad \beta(1,1)=-1,
    \]
    \item $\dim A=\infty$,
    \item  $T_{\mathbb{Z}_{2}}(A)=T_{\mathbb{Z}_{2}}(E)$.
    \end{enumerate}
\end{Lemma}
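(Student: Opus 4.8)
The plan is to establish the three claims essentially by assembling the observations that precede the statement into a clean argument, since the hard work has already been done in the discussion above the lemma. I would prove item (1) first, as (2) and (3) follow from it together with results already quoted in the excerpt.

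For item (1), I would start from the bicharacter axioms. Using $\beta(g+s,h)=\beta(g,h)\beta(s,h)$ with $g=s=0$ gives $\beta(0,h)=\beta(0,h)^2$, and since $\beta(0,h)\in K^\ast$ this forces $\beta(0,h)=1$ for every $h$; in particular $\beta(0,0)=\beta(0,1)=1$. The identity $\beta(g,h)=\beta(h,g)^{-1}$ then yields $\beta(1,0)=\beta(0,1)^{-1}=1$ as well. It remains to pin down $\beta(1,1)$. Applying $\beta(g+s,h)=\beta(g,h)\beta(s,h)$ with $g=s=h=1$ and using $2\cdot 1=0$ in $\mathbb{Z}_2$ gives $\beta(0,1)=\beta(1,1)^2$, so $\beta(1,1)^2=1$ and hence $\beta(1,1)\in\{\pm 1\}$. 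Here is the only place where minimality enters: if $\beta(1,1)=1$ then $\beta(g,0)=\beta(g,1)=1$ for every $g\in\mathbb{Z}_2$, so the columns indexed by $0$ and $1$ of $M^A$ coincide, contradicting minimality. Equivalently, one invokes \cite[Theorem 7]{Eli1}: minimality is equivalent to $\det M^A\neq 0$, and with the first three entries equal to $1$ the determinant is $\beta(1,1)-1$, which is nonzero precisely when $\beta(1,1)=-1$. Either route forces $\beta(1,1)=-1$.

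For item (2), once $\beta(1,1)=-1$ is established, I would simply apply Lemma \ref{infinite} with $h=1$: since there exists $h\in\mathbb{Z}_2$ with $\beta(h,h)=-1$, that lemma immediately gives $\dim A=\infty$. For item (3), I would observe that item (1) shows the decomposition matrix $M^A$ coincides entrywise with $M^E$, the matrix of the natural $\mathbb{Z}_2$-grading of the Grassmann algebra. Since both $A$ and $E$ are regular $G$-graded algebras with the same regular decomposition matrix, \cite[Lemma 31]{Eli1} (which asserts that two regular gradings with equal decomposition matrices satisfy the same graded identities) yields $T_{\mathbb{Z}_2}(A)=T_{\mathbb{Z}_2}(E)$.

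I do not anticipate a genuine obstacle here: the statement is essentially a consolidation of the preceding paragraph, and the only conceptually nonroutine point is the use of minimality (or equivalently the Aljadeff--David determinant criterion) to rule out $\beta(1,1)=1$. The main thing to be careful about is to invoke the correct external results—Lemma \ref{infinite} for the infinite-dimensionality and \cite[Lemma 31]{Eli1} for the coincidence of graded identities—rather than attempting to reprove them, and to make explicit that the bicharacter relations alone determine all entries of $M^A$ except the sign of $\beta(1,1)$, which minimality then fixes.
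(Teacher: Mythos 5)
Your proposal is correct and follows essentially the same path as the paper: the bicharacter identities force $\beta(0,0)=\beta(0,1)=\beta(1,0)=1$ and $\beta(1,1)\in\{\pm 1\}$, minimality rules out $\beta(1,1)=1$, then Lemma \ref{infinite} gives $\dim A=\infty$, and $M^{A}=M^{E}$ together with \cite[Lemma 31]{Eli1} gives $T_{\mathbb{Z}_{2}}(A)=T_{\mathbb{Z}_{2}}(E)$. The only (harmless) difference is that your primary argument for $\beta(1,1)=-1$ uses the definition of minimality directly (the two columns of $M^{A}$ would coincide), whereas the paper invokes $\det M^{A}\neq 0$ via \cite[Theorem 7]{Eli1} --- a route you also offer as an alternative, and your direct argument is in fact slightly more elementary since it avoids the Aljadeff--David theorem.
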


We observe that the result of Lemma \ref{remark.superalgebra} is stronger than the converse of Lemma \ref{infinite} when assuming that the regular decomposition is minimal.

Before proceeding, we need a weaker definition of regularity. Recall that if $R$ is $G$-graded then $R$ is $\beta$-commutative whenever for every $g$, $h\in G$ and for every $r_g\in R_g$ and $r_h\in R_h$ one has $r_gr_h=\beta(g,h) r_hr_g$ for some scalars $\beta(g,h)$ that depend only on $g$ and $h$ but not on the choice of the elements of $R_g$ and $R_h$.
\begin{Definition}
    Given a $G$-graded $\beta$-commutative algebra $R$, we say that $R$ is an \emph{$m$-regular} algebra if, for every $k \leq m$ and any $k$-tuple $(g_{1},\ldots,g_{k}) \in G^{k}$, there exist elements $r_{g_{i}} \in R_{g_{i}}$ such that  
\[
r_{g_{1}} \cdots r_{g_{k}} \neq 0.
\]
\end{Definition}
\begin{Example}
\label{Exemplo.weakly.regular}
    \begin{enumerate}
        \item Every $G$-graded regular algebra is $k$-regular for each $k\in \mathbb{N}$.
        \item For every $n\in \mathbb{N}$, let $V$ be a vector space with an ordered basis $e_{1}$, \dots, $e_{n}\in V$. Recall that the Grassmann algebra on $n$ generators (or of a vector space of dimension $n$) constructed over $V$ and denoted by $E_{n}$, is the associative algebra generated by $e_{1}$, \dots, $e_{n}$, with defining relations
        \[
        e_{i}e_{j}+e_{j}e_{i}=0,\quad \text{for}\quad 1\leq i,j\leq n.
        \]
        It can be shown that the set 
        \[
        \mathscr{T}:=\{e_{i_{1}}\cdots e_{i_{k}}\mid 1\leq i_{1}<\cdots < i_{k}\leq n,\quad \text{for} \quad k\leq n\},
        \]
        together with $1\in K$, forms a basis of $E_{n}$, thus $\dim E_n=2^n$. We notice that $E_{n}$ has a natural $\mathbb{Z}_{2}$-grading given by
        \begin{align*}
            (E_{n})_{0}&=\text{span}_{K}\{1\}\cup \{e_{i_{1}}\cdots e_{i_{k}}\in \mathscr{T}\mid \text{$k$ is even}\}\\
            (E_{n})_{1}&=\text{span}_{K}\{e_{i_{1}}\cdots e_{i_{k}}\in \mathscr{T}\mid \text{$k$ is odd}\}.
        \end{align*}
       Then, by definition it is clear that $E_{n}$ is $n$-regular, but is not $n+1$-regular.
    \end{enumerate}
\end{Example}

\begin{Lemma}
\label{Grassmann.finite}
Let $A$ be a regular $\mathbb{Z}_{2}$-graded algebra with skew-symmetric bicharacter $\beta$ whose regular decomposition is minimal. Then, for every $n\in \mathbb{N}$, there exists a finite dimensional $\mathbb{Z}_{2}$-graded algebra $F_{n}\subseteq A$, which is $n$-regular and isomorphic to $E_{n}$, the Grassmann algebra of a vector space of dimension $n$.
\end{Lemma}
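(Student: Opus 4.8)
The plan is to realize $E_n$ directly inside $A$ as the subalgebra generated by $n$ suitably chosen homogeneous elements of $A_1$, and then to establish the isomorphism by proving that the obvious map out of $E_n$ is injective. The relations defining $E_n$ will hold automatically, so the whole content lies in injectivity.

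First I would record the relevant relations. By Lemma \ref{remark.superalgebra}, minimality of the regular decomposition forces $\beta(1,1)=-1$. Hence any $a$, $b\in A_1$ satisfy $ab=\beta(1,1)ba=-ba$, and in particular $a^2=-a^2$, so $a^2=0$ since $\operatorname{char}K=0$. Thus every finite family of elements of $A_1$ satisfies exactly the defining relations $e_ie_j+e_je_i=0$ of the Grassmann algebra from Example \ref{Exemplo.weakly.regular}. Next I would invoke regularity: condition (i) of Definition \ref{def.regular.gradings}, applied to the $n$-tuple $(1,\ldots,1)\in(\mathbb{Z}_2)^n$, produces elements $a_1,\ldots,a_n\in A_1$ with $a_1\cdots a_n\neq 0$. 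By the previous observation and the universal property of the presentation of $E_n$, the assignment $e_i\mapsto a_i$ extends to a homomorphism of $\mathbb{Z}_2$-graded algebras $\phi\colon E_n\to A$; I then set $F_n:=\phi(E_n)$, the finite-dimensional subalgebra generated by $1,a_1,\ldots,a_n$.

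The hard part will be the injectivity of $\phi$, and I expect this to be the only step that is not purely formal. I would deduce it from the following structural property of $E_n$: every nonzero ideal $I$ of $E_n$ contains the top monomial $\omega:=e_1\cdots e_n$ (the generator of the socle). To see this, write an arbitrary nonzero $x=\sum_S c_Se_S\in I$ in the monomial basis, where $e_S:=e_{i_1}\cdots e_{i_k}$ for $S=\{i_1<\cdots<i_k\}$, choose $S_0$ with $c_{S_0}\neq 0$ of minimal cardinality, and multiply $x$ on the left by $e_{S_0^{c}}$, where $S_0^{c}:=\{1,\ldots,n\}\setminus S_0$. Since $e_{S_0^{c}}e_S\neq 0$ forces $S\cap S_0^{c}=\emptyset$, i.e.\ $S\subseteq S_0$, while minimality of $|S_0|$ forces $|S|\geq|S_0|$ on the support, only the term $S=S_0$ survives, giving $e_{S_0^{c}}x=\pm c_{S_0}\,\omega$; hence $\omega\in I$.

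Applying this to $I=\ker\phi$, which is an ideal of $E_n$, finishes the argument: if $\ker\phi\neq 0$ then $\omega\in\ker\phi$, that is $a_1\cdots a_n=\phi(\omega)=0$, contradicting the choice of the $a_i$. Therefore $\ker\phi=0$ and $\phi$ is an isomorphism onto $F_n$, so $F_n\cong E_n$ as $\mathbb{Z}_2$-graded algebras. In particular $\dim F_n=2^n<\infty$, and being graded-isomorphic to $E_n$ it is $n$-regular by Example \ref{Exemplo.weakly.regular}, which is exactly what the statement asserts.
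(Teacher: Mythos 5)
Your proof is correct, but it takes a genuinely different route from the paper's. The paper works from inside $A$ outward: it first proves, by a minimal-counterexample argument (take a shortest vanishing linear combination of ordered monomials in the $a_i$ and multiply it by a generator $a_r$ occurring in some but not all of its terms), that the ordered monomials in $a_1,\ldots,a_n$ are linearly independent; it then defines $F_n$ as their span and builds an explicit linear map $\nu\colon F_n\to E_n$ on this basis, checking multiplicativity by matching permutation signs on both sides. You map in the opposite direction: the relations $a_ia_j+a_ja_i=0$ (and $a_i^2=0$, from $\beta(1,1)=-1$ via Lemma \ref{remark.superalgebra} and characteristic zero) give a homomorphism $\phi\colon E_n\to A$ for free by the universal property of the presentation, and the whole content is concentrated in injectivity, which you deduce from the structural fact that every nonzero ideal of $E_n$ contains the top monomial $e_1\cdots e_n$; your proof of that fact (multiply a minimal-support term's complement into the element) is correct, and applying it to $\ker\phi$ correctly turns nontriviality of the kernel into $a_1\cdots a_n=0$. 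The two arguments share the same underlying trick -- multiplying by a complementary product to isolate a single term -- but yours avoids both the sign-chasing homomorphism check and the explicit linear-independence argument, at the modest cost of proving the ideal lemma; the paper's version, on the other hand, exhibits the monomial basis of $F_n$ concretely inside $A$, which it then reuses verbatim when assembling the direct system in Lemma \ref{Grassmann.direct} and Theorem \ref{main.embedding}. Both establish the graded isomorphism and $n$-regularity (yours by transport of structure from Example \ref{Exemplo.weakly.regular}), so your proposal is complete as written.
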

\begin{proof}Given $n\in \mathbb{N}$, let  
$\underbrace{(1,\ldots,1)}_{n}\in \mathbb{Z}_{2}^{n}$.  
By regularity, there exist $a_{1}$, \dots, $a_{n} \in A_{1}$ such that $a_{1} \cdots a_{n} \neq 0$. For every $k \leq n$, we denote by $\mathscr{U}_{k}$ the set of monomials of length $\le k$ on the elements $a_1$, \dots, $a_n$. Hence, $\mathscr{U}_{k}$ consists of the products of the form $a_{i_{1}} \cdots a_{i_{m}}$, where $1 \leq i_{1} < \cdots < i_{m} \leq n$, and $m\le k$.  

Consider the subset $S$ of $\mathbb{N} \cup \{0\}$ consisting of all $q \in \mathbb{N} \cup \{0\}$ such that there exist $f_{1}$, \dots, $f_q\in \mathscr{U}_{k}$, for some $k\le n$, and nonzero scalars $c_{1}$, \dots, $c_{q}$ such that  
\begin{equation}  
\label{eq.1}  
c_{1}f_{1} + \cdots + c_{q}f_{q} = 0.  
\end{equation}  

If $S$ is nonempty, then we choose the least possible $q > 0$, along with $f_{1}$, \dots, $f_{q} \in \mathscr{U}_{k}$, for the appropriate $k$, and $c_{1}$, \dots, $c_{q} \in K \setminus \{0\}$ satisfying (\ref{eq.1}). 

If all $f_i$ correspond to the same product of some elements out of $a_{j_1}$, \dots, $a_{j_p}$, then $q=1$ and (\ref{eq.1}) becomes $c(a_{j_1}\cdots a_{j_p})=0$ for some $0\ne c\in K$, a contradiction. On the other hand, if some $a_r$ appears in some, but not all of the $f_i$ then we multiply (\ref{eq.1}) by this $a_r$. Thus all summands that contain $a_r$ vanish, and we are left with a shorter linear combination. This contradicts the minimality of $q$. Therefore the sets $\mathscr{U}_{k}$ are linearly independent. 

We denote by $F_{n}$ the vector space with basis $\mathscr{U}:=\{1\}\cup \mathscr{U}_{1}\cup \cdots \cup \mathscr{U}_{n}$. By construction, since the generators of $F_n$ are homogeneous in the grading on $A$, it follows that $F_{n}$ is a graded subalgebra of $A$. Now we take $E_{n}$, the Grassmann algebra on $n$ generators $e_{1}$, \dots, $e_{n}$, and the basis $\mathscr{T}$ described in Example \ref{Exemplo.weakly.regular}. We shall construct an explicit (and at this stage quite obvious) isomorphism between $F_{n}$ and $E_{n}$. Consider the unique linear map $\nu\colon F_{n}\to E_{n}$ defined by mapping each  $a_{i_{1}}\cdots a_{i_{k}}\in \mathscr{U}$ to $e_{i_{1}}\cdots e_{i_{k}}\in \mathscr{T}$. It is well defined because $\mathscr{U}$ is a basis of $F_n$.

Since $\nu$ carries a basis of $F_{n}$ into a basis of $E_{n}$, it follows that $\nu$ is an isomorphism of graded vector spaces. To show that $\nu$ is an algebra homomorphism, it is enough to verify the homomorphism condition on the basic elements. Take $a=a_{i_{1}}\cdots a_{i_{m}}$ and $b=a_{j_{1}}\cdots a_{j_{l}}\in \mathscr{U}$, and notice that if $\{i_{1},\ldots, i_{m}\}\cap \{j_{1},\ldots, j_{l}\}\neq \emptyset$, then
\[
\nu(ab)=0=\nu(a)\nu(b).
\]
Suppose $\{i_{1},\ldots, i_{m}\}\cap \{j_{1},\ldots, j_{l}\}=
\emptyset$. We can order the elements $i_{1},\dots, i_{m}, j_{1},\dots, j_{l}$ to obtain $d_{1}<\cdots <d_{k+l}$. This reordering might change only the sign of the corresponding element in $F_n$. But by the relations of $F_{n}$, it follows that there exists $N\in \mathbb{N}$ such that
\[
\nu(ab)=\nu((-1)^{N}a_{d_{1}}\cdots a_{d_{k+l}})=(-1)^{N}e_{d_{1}}\cdots e_{d_{k+l}}.
\]
Here $N$ counts how many transpositions one has to make in order to obtain the correct ordering of the subscripts (of course, one takes this number modulo 2 afterwards). In effect, $N$ is the usual sign of the permutation of the indices $(d_1, d_2, \ldots, d_{k+l})$. 
On the other hand, by the multiplication rules in $E_n$ we will have the same $N$ when reordering the subscripts, that is
\[
\nu(a)\nu(b)=e_{i_{1}}\cdots e_{i_{k}}e_{j_{1}}\cdots e_{j_{l}}=(-1)^{N}e_{d_{1}}\cdots e_{d_{k+l}}.
\]
Therefore $\nu(ab)=\nu(a)\nu(b)$. Hence, $\nu\colon F_{n}\rightarrow E_{n}$ is the desired isomorphism of graded algebras, and we are done.
\end{proof}
\textbf{Notation:}  Let $R$ be an algebra with unity and suppose $R$ admits a grading by the nonnegative integers, $R=\oplus_{i\ge 0} R_i$. Then $R_iR_j\subseteq R_{i+j}$, and if we set $R^{+}=\sum_{i\ge 1} R_i$ then we get that $R^+$ is a subalgebra of $R$, and $1\notin R^+$. We shall apply this to the Grassmann algebras (finite or infinite dimensional ).

{ 
Although a regular algebra contains a finite dimensional subalgebra isomorphic to $E_{n}$, the Grassmann algebra on $n$ generators, for every $n\in \mathbb{N}$, it may happen that a $\mathbb{Z}_{2}$-graded regular algebra does not contain a copy of the infinite dimensional  Grassmann algebra. The following example illustrates this fact.

\begin{Example} 
\label{example_referee}
Consider a set of non-commutative variables $Y=\{x_{i,n}\mid i\leq n,\; n\in \mathbb{N}\}$ and let $A$ be the unital associative algebra defined by the generators $Y$ subject to the relations
\[
x_{i,n}x_{j,m}=-\delta_{n,m}x_{j,m}x_{i,n},\quad i\leq n,\quad j\leq m,\quad n,m\in \mathbb{N},
\]
where $\delta_{n,m}$ denotes the Kronecker delta. Then $A$ can be identified with the algebra
\[
K\oplus\Big(\bigoplus_{n\in \mathbb{N}}E_{n}^{+}\Big),
\]
where $E_{n}^{+}$ is the finite dimensional non-unital Grassmann algebra generated by $\{x_{i,n}\mid i\leq n\}$. 

Thus, $A$ has a natural structure of a $\mathbb{Z}_{2}$-graded regular algebra given by
\[
A_{0}=K\oplus \Big(\bigoplus_{n\in \mathbb{N}} (E_{n})_{0}\Big),\quad \text{and}\quad A_{1}=\bigoplus_{n\in \mathbb{N}} (E_{n})_{1}.
\]
Of course, the regular decomposition of $A$ is minimal because $M^{A}=M^{E}$. Suppose that there exists a graded subalgebra $B$ of $A$ such that $B \cong E$. Let $e_{1}, e_{2}, \ldots, e_{n}, \ldots$ be the generators of the Grassmann algebra $E$. Then there exists a graded embedding $\psi\colon E\rightarrow A$.

Thus, $\psi(e_{1})=c_{1}+\cdots+c_{k}$, where $c_{i}\in (E_{i})_{1}$, $1\leq i\leq k$. Now, for $2\leq \ell\leq k+1$, we can write $\psi(e_{\ell})=a_{\ell}+b_{\ell}$, where $a_{\ell}\in \bigoplus_{i=1}^{k}(E_{i})_{1}$ and $b_{\ell}\in \bigoplus_{j\geq k+1} (E_{j})_{1}$. By construction we have $\psi(e_{1})b_{\ell}=0$ for all $2\leq \ell\leq k+1$. Then
\[
    \psi(e_{1}\cdots e_{k+1}) 
    = \psi(e_{1})\cdots \psi(e_{k+1})
    = \psi(e_{1})a_{2}\cdots a_{k+1}\in \bigoplus_{i=1}^{k}(E_{i}^{+})^{k+1}.
\]
Therefore, since $(E_{n}^{+})^{m}=0$ if $m>n$, we obtain $\psi(e_{1}\cdots e_{k+1})=0$, that is, $e_{1}\cdots e_{k+1}=0$, which is a contradiction because this is a nonzero basic element of $E$. We conclude that there does not exist a graded subalgebra of $A$ isomorphic to $E$.
\end{Example}}

\begin{Definition}
\label{Defi.strongly}
Let $A$ be a $\mathbb{Z}_{2}$-graded algebra with a minimal regular grading. We say the grading on $A$ is \textsl{strongly regular} if for every homogeneous element $0\neq b\in A$, there exists $c\in A_{1}$ such that $bc\neq 0$.
\end{Definition}

\begin{Proposition}
\label{strongly.regular}
Let $A$ be a $\mathbb{Z}_{2}$-graded algebra with a minimal regular grading that is strongly regular as well, with associated bicharacter $\beta$. Then there exists $B$, a $\mathbb{Z}_2$-graded subalgebra of $A$, that is isomorphic to the infinite dimensional  Grassmann algebra $E$ with its canonical grading.
\end{Proposition}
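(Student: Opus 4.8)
The plan is to produce an infinite family $a_1, a_2, \ldots \in A_1$ all of whose finite products of pairwise distinct elements are nonzero; the subalgebra $B$ they generate is then homogeneous (the generators lie in $A_1$), and by the same linear-independence argument used in the proof of Lemma \ref{Grassmann.finite} the assignment $a_i \mapsto e_i$ extends to a graded isomorphism $B \cong E$. Equivalently, writing $F_n = \langle a_1, \ldots, a_n\rangle$, the inclusions $F_n \subseteq F_{n+1}$ realize the standard direct system $E_1 \hookrightarrow E_2 \hookrightarrow \cdots$ inside $A$, so that $B = \bigcup_n F_n = \varinjlim_n E_n \cong E$; the isomorphisms $F_n \cong E_n$ are supplied by Lemma \ref{Grassmann.finite}, and passing to the direct limit preserves injectivity. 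Thus the whole problem reduces to a single inductive \emph{extension step}: given $a_1, \ldots, a_n \in A_1$ with $a_1 \cdots a_n \neq 0$, find $a_{n+1} \in A_1$ with $a_1 \cdots a_{n+1} \neq 0$. Recall that by Lemma \ref{remark.superalgebra} we already know $\beta(1,1) = -1$, so the elements of $A_1$ anticommute and square to zero, and moreover $T_{\mathbb{Z}_{2}}(A) = T_{\mathbb{Z}_{2}}(E)$, which forces $A_0$ to be central in $A$. Note also that $A_1$ is infinite-dimensional, since by condition~(i) it contains arbitrarily large linearly independent anticommuting families.

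For the extension step I would proceed uniformly. Set $w = a_1 \cdots a_n$. The admissible choices for $a_{n+1}$ are exactly the $c \in A_1$ lying outside the subspaces $\ker(R_p) \cap A_1$, where $p$ runs over the finitely many nonzero subproducts of $a_1, \ldots, a_n$ and $R_p$ denotes right multiplication by $p$. Since a finite union of proper subspaces of the infinite-dimensional space $A_1$ (over the infinite field $K$) is never everything, such a $c$ exists \emph{provided} each of these subspaces is proper, i.e. provided $pA_1 \neq 0$ for every nonzero subproduct $p$. For a subproduct $p$ of \emph{odd} length, $p \in A_1$ and $pA_1 \neq 0$ is precisely the strong regularity hypothesis (Definition \ref{Defi.strongly}); in particular, when $n$ is odd one may simply take $c$ with $wc\ne 0$. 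Hence the only thing that can obstruct the construction is an \emph{even}-length subproduct $p$ with $pA_1 = 0$.

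The main obstacle is therefore the following claim, which I would isolate as a lemma: \emph{if $p$ is a nonzero product of an even number of elements of $A_1$, then $pA_1 \neq 0$.} To prove it I would run a Zorn's lemma argument on the poset of Grassmann families in $A_1$ (subsets all of whose finite products are nonzero), ordered by inclusion; the union of a chain is again such a family, so a maximal family $\mathcal{M}$ exists. If $\mathcal{M}$ is infinite we are done; otherwise $\mathcal{M} = \{a_1, \ldots, a_n\}$ and, putting $w = a_1 \cdots a_n$, maximality means exactly $wA_1 = 0$. If $n$ is odd this contradicts strong regularity at once, so $n$ must be even. To rule this out I would exploit the centrality of $A_0$: writing $w = u a_n$ with $u = a_1 \cdots a_{n-1} \in A_1$ and applying strong regularity to the odd element $u$ yields $c$ with $uc \neq 0$, and anticommuting $a_n$ past $c$ gives the identity $wc = -(uc)a_n$, which ties the even-length annihilation back to odd-length data; feeding in the existence of arbitrarily long nonzero products of elements of $A_1$ guaranteed by regularity condition~(i) should then force $w = 0$, a contradiction. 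I expect exactly this conversion of the even-length annihilation $wA_1 = 0$ into a contradiction — combining strong regularity, the centrality of $A_0$, and the unbounded-length products — to be the delicate and decisive point; everything else (the finite-union-of-subspaces extension step, the direct-limit bookkeeping, and the identification $B \cong E$) is routine once this claim is established.
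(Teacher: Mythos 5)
Your proof skeleton coincides with the paper's: inductively produce $a_1,a_2,\ldots\in A_1$ with all initial products nonzero, then quote the linear-independence argument of Lemma~\ref{Grassmann.finite} and identify the generated subalgebra with $E$. You are also right that the only real issue is the extension step, and your diagnosis is sharper than the paper's own text: Definition~\ref{Defi.strongly} supplies a right multiplier only for nonzero elements of $A_1$, i.e.\ only when the current product $a_1\cdots a_n$ has odd length, whereas the paper invokes ``strong regularity'' at every stage regardless of parity. However, the lemma you isolate to handle the even stages --- \emph{a nonzero product $p$ of an even number of elements of $A_1$ satisfies $pA_1\neq 0$} --- is false, so your Zorn argument cannot be completed. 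Let $I\subseteq E$ be the graded ideal generated by $\{e_1e_2e_i\mid i\geq 3\}$ and $A=E/I$. Then $A$ is minimal regular (all products of distinct generators among $\bar e_3,\bar e_4,\ldots$ survive) and strongly regular in the sense of Definition~\ref{Defi.strongly}: an odd basis monomial of $A$ never contains both $e_1$ and $e_2$, so for $0\neq b\in A_1$ one has $b\bar e_N\neq 0$ whenever $N$ exceeds every index occurring in $b$. Nevertheless $\bar e_1\bar e_2\neq 0$ while $\bar e_1\bar e_2 A_1=0$. Thus $\{\bar e_1,\bar e_2\}$ is a \emph{finite maximal} Grassmann family, even though infinite ones exist (e.g.\ $\{\bar e_3,\bar e_4,\ldots\}$): Zorn's lemma only hands you \emph{some} maximal family, which may be finite of even size, and no manipulation of the kind you sketch ($wc=-(uc)a_n$, centrality of $A_0$, long products from condition (i)) can produce a contradiction, because there is none to be had.

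Worse, the gap cannot be repaired under the literal Definition~\ref{Defi.strongly}, because the proposition itself then fails. Write $E_{2n}^{+}$ for the span of the monomials of positive length in $E_{2n}$, and let $A=K\cdot 1\oplus\bigoplus_{n\geq 1}E_{2n}^{+}$ with componentwise multiplication (products of elements from distinct summands are zero). This $A$ is $\beta$-commutative with the Grassmann bicharacter and satisfies condition (i) of regularity --- realize any prescribed tuple inside a single summand $E_{2n}^{+}$ with $n$ large --- so it carries a minimal regular grading; and it is strongly regular, because in $E_{2n}$ the annihilator of the odd part is spanned by the top monomial $e_1\cdots e_{2n}$, which is \emph{even}, so any nonzero odd component $b^{(n_0)}$ of $b$ admits a generator $e_j$ with $b^{(n_0)}e_j\neq 0$. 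Yet $A$ contains no graded copy of $E$: if $b_1,b_2,\ldots\in A_1$ and $b_1$ is supported in the summands with $n\leq N$, then $b_1\cdots b_k=0$ as soon as $k>2N$, since in each summand the product is a sum of monomials of length at least $k$. (The same algebra refutes your lemma once more: each top monomial is a nonzero even product annihilating $A_1$.) What the paper's induction actually uses, and what any argument along these lines needs, is the stronger hypothesis that \emph{every} nonzero product of elements of $A_1$, of either parity, admits $c\in A_1$ with $(a_1\cdots a_n)c\neq 0$; under that reading the straightforward induction works, and your union-of-kernels refinement over all subproducts is superfluous, since only the full product must be extended. So you located the decisive point correctly --- and it is a genuine defect of the paper's own proof --- but the claim you propose to prove there is false, and the statement as literally formulated admits no proof.
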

\begin{proof}
    Given an arbitrary $0\neq a_{1}\in A_{1}$, we choose $a_{2}\in A_{1}$ such that $a_{1}a_{2}\neq 0$, by the strong regularity. Fix a positive integer $n>1$, suppose we have constructed $a_{3}$, \dots, $a_{n}\in A_{1}$ such that $a_{1}a_2 \cdots a_{n}\neq 0$. Then, once again by the strong regularity, there exists $a_{n+1}\in A_{1}$ such that $a_{1}\cdots a_{n}a_{n+1}\neq 0$. On the other hand, by the proof of Lemma \ref{Grassmann.finite} we conclude that $\Omega=\{a_{1}\cdots a_{n}\mid n\in \mathbb{N}\}$ is a linearly independent set. It follows that the algebra $B$ generated by $\Omega$ is isomorphic to $E$, as $\mathbb{Z}_2$-graded algebras.
\end{proof}

\subsection{ Constructing regular gradings via direct limits}

\begin{Lemma}
\label{graded.algebra}
Let $\{A_{n} \mid n\in \mathbb{N}\}$ be a family of $G$-graded algebras and suppose that for every $n\leq m$, there exists a graded homomorphism $\phi_{m}^{n}\colon A_{n}\to A_{m}$, such that $(A_{n},\phi_{m}^{n})_{n\leq m}$ is a direct system. Then  the direct limit $\varinjlim A_{n}$ is a $G$-graded algebra. 
\end{Lemma}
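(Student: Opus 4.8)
The plan is to take the direct limit $L := \varinjlim A_n$ as an algebra---its existence and the explicit description of its multiplication being standard, see \cite{rotman2009introduction, weibel1994introduction}---and then to exhibit a $G$-grading on it defined degreewise. Recall that the elements of $L$ are equivalence classes $[a]$ of elements $a\in A_n$, where $[a]=[b]$ for $b\in A_m$ precisely when $\phi_k^n(a)=\phi_k^m(b)$ for some $k\ge n,m$, and that a product $[a][b]$ is computed by pushing representatives forward to a common index $A_k$ via the structure maps and multiplying there. The whole point of the argument is therefore to promote the stagewise gradings to a grading on $L$.

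First I would define, for each $g\in G$, the subspace $L_g\subseteq L$ consisting of all classes admitting a homogeneous representative of degree $g$; equivalently, $L_g$ is the image in $L$ of $\varinjlim (A_n)_g$, which is legitimate because each $\phi_m^n$ is graded and hence restricts to a map $(A_n)_g\to (A_m)_g$, making $((A_n)_g,\phi_m^n|_{(A_n)_g})$ a direct system of vector spaces. The decomposition $A_n=\oplus_{g\in G}(A_n)_g$ at every stage immediately yields $L=\sum_{g\in G}L_g$: writing a representative $a\in A_n$ as $a=\sum_g a_g$ with $a_g\in (A_n)_g$ gives $[a]=\sum_g[a_g]$ with each $[a_g]\in L_g$.

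The key step is directness of this sum. Suppose $\sum_g[a_g]=0$ with each $[a_g]\in L_g$. Using that the index set is filtered and that $G$ is finite, I would push all of the (finitely many) homogeneous representatives forward to a single index, obtaining $b_g\in (A_k)_g$ with $[b_g]=[a_g]$ and $[\,\sum_g b_g\,]=0$. By the description of vanishing in a direct limit there is some $k'\ge k$ with $\phi_{k'}^k(\sum_g b_g)=0$; since $\phi_{k'}^k$ is graded this reads $\sum_g\phi_{k'}^k(b_g)=0$ with $\phi_{k'}^k(b_g)\in(A_{k'})_g$, and the directness of the grading on $A_{k'}$ forces $\phi_{k'}^k(b_g)=0$ for each $g$, whence $[a_g]=[b_g]=0$. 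This establishes $L=\oplus_{g\in G}L_g$ as graded vector spaces.

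Finally I would verify compatibility of the multiplication with the grading. Given $[a]\in L_g$ and $[b]\in L_h$ with homogeneous representatives, pushing both to a common $A_k$ produces $a'\in(A_k)_g$ and $b'\in(A_k)_h$ with $[a][b]=[a'b']$; since $A_k$ is $G$-graded, $a'b'\in (A_k)_{gh}$, so $[a'b']\in L_{gh}$ and thus $L_gL_h\subseteq L_{gh}$. The main obstacle is precisely the directness argument: it is there that both gradedness of the structure maps and finiteness of $G$ are indispensable, since one must propagate a relation holding in $L$ back to a genuinely homogeneous relation at a finite stage, where the ambient direct-sum decomposition can be applied.
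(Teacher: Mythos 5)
Your proof is correct and follows essentially the same route as the paper: grade the limit by $(\varinjlim A_{n})_{g}=\varinjlim (A_{n})_{g}$, using that the graded structure maps restrict to each homogeneous component to give direct systems $((A_{n})_{g},\phi_{m}^{n}|_{(A_{n})_{g}})$, and verify $L_{g}L_{h}\subseteq L_{gh}$ by pushing representatives to a common stage. The only difference is that where the paper quotes the general fact that direct limits commute with direct sums to obtain the direct-sum decomposition, you establish directness by hand via the vanishing criterion in a direct limit; both arguments are valid.
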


\begin{proof} Denote by $A:= \varinjlim A_{n}$ the direct limit. Let us consider the canonical injections $\lambda_{i} \colon A_{i} \to \bigoplus_{i} A_{i}$, for $i\in \mathbb{N}$.

It is well known that
\[
\varinjlim A_{n} = \Bigg(\bigoplus_{n} A_{n}\Bigg) / S,
\]
where $S$ is the vector space spanned by the elements $\lambda_{j} \phi_{j}^{i}(a_{i}) - \lambda_{i}(a_{i})$, for $i\leq j$. By \cite[Lemma 5.30]{rotman2009introduction}, we have that every element $x\in A$ can be written as $x=\varepsilon_{i}(a_{i})$, where the linear map $\varepsilon_{i}\colon b\in A_{i}\mapsto \lambda_{i}(b)+S\in A$ satisfies
\[
\varepsilon_{j}\phi_{j}^{i}=\varepsilon_{i},\quad \text{for}\quad i\leq j.
\]
Given two elements $x=\varepsilon_{i}(a_{i})$ and $y=\varepsilon_{j}(a_{j})$, take $k\geq \max\{i,j\}$. Since $\phi_{k}^{i}(a_{i})\phi_{k}^{j}(a_{j})\in A_{k}$, we define a product on $A$ in the following way:
\begin{equation}
\label{eq.prod.}
    x\cdot y:= \varepsilon_{k}(\phi_{k}^{i}(a_{i})\phi_{k}^{j}(a_{j})).
\end{equation}
Thus (\ref{eq.prod.}) defines an algebra structure on $A$.

Before proceeding with the proof, for every $g \in G$ and $n \leq m$, we define the linear map $(\varphi_{g})_{m}^{n} := (\phi_{m}^{n})_{\mid (A_{n})_{g}}$. It is well-defined because $\phi_{m}^{n}$ is a graded homomorphism, and therefore $(\varphi_{g})_{m}^{n}$ maps $(A_{n})_{g}$ into $(A_{m})_{g}$. Additionally, if $p \leq n \leq m$, since $(A_{n}, \phi_{m}^{n})$ is a direct system, we obtain
\[
(\varphi_{g})_{m}^{n} (\varphi_{g})_{n}^{p} = (\phi_{m}^{n} \phi_{n}^{p})_{\mid (A_{p})_{g}} = (\phi_{m}^{p})_{\mid (A_{p})_{g}}.
\]
Hence, $((A_{n})_{g}, (\varphi_{g})_{m}^{n})_{n \leq m}$ is a direct system.

On the other hand, since the direct limit commutes with direct sums we have
\[
A=\varinjlim A_{n}=\varinjlim \bigoplus_{g\in G} (A_{n})_{g}=\bigoplus_{g\in G}\varinjlim (A_{n})_{g},
\]
and this implies that $A$ is $G$-graded as a vector space, with
\[
(\varinjlim A_{n})_{g}=\varinjlim (A_{n})_{g},\quad \text{for every}\quad g\in G.
\]
 We have to show that $A$ is a graded algebra. To this end, we choose arbitrary $g$, $h\in G$, and take $x=\varepsilon_{i}(a)\in \varinjlim (A_{n})_{g}$, and $y=\varepsilon_{j}(b)\in \varinjlim (A_{n})_{h}$. Here $a\in (A_{i})_{g}$ and $b\in (A_{j})_{h}$ for some $i$, $j\in \mathbb{N}$. By hypothesis the maps $\{\phi_{m}^{n}\mid n\leq m\}$ are graded homomorphisms, and hence, if $k\geq \max\{i,j\}$, then $\phi_{k}^{i}(a)\in (A_{k})_{g}$ and $\phi_{k}^{j}(b)\in (A_{k})_{h}$. All this implies $\phi_{k}^{i}(a)\phi_{k}^{j}(b)\in (A_{k})_{g+h}$. Therefore, we have
\[
x\cdot y=\varepsilon_{k}(\phi_{k}^{i}(a)\phi_{k}^{j}(b))\in \varinjlim (A_{n})_{g+h}=(\varinjlim A_{n})_{g+h}.
\]
In other words, $A$ is a $G$-graded algebra. 
\end{proof}

We observe here that for the product defined above, if $x=\varepsilon_{i}(a_{i})$,  $y=\varepsilon_{i}(b_{i})$, $a_{i}$, and $b_{i}\in A_{i}$, then
\[
x\cdot y= \varepsilon_{i}(a_{i}b_{i}).
\]

The following proposition gives a method to construct regular gradings starting from $m$-regular ones.
\begin{Proposition}  
\label{family.regular}
Let $\{A_{n} \mid n\in \mathbb{N}\}$ be a family of $G$-graded $\beta$-commutative algebras, satisfying the following conditions:
\begin{enumerate}  
    \item for every $n\leq m$, there exists an injective graded homomorphism  $\phi_{m}^{n}\colon A_{n}\rightarrow A_{m}$, such that $(A_{n},\phi_{m}^{n})$ forms a direct system; 
    \item for every $n\in \mathbb{N}$, the algebra $A_n$ is $n$-regular.  
\end{enumerate}  
Then the direct limit $A = \varinjlim A_{n}$ is a $G$-graded regular algebra with skew-symmetric bicharacter $\beta$.  
\end{Proposition}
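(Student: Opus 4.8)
The plan is to verify the two defining conditions of a regular grading for $A=\varinjlim A_n$, since Lemma \ref{graded.algebra} already guarantees that $A$ is a $G$-graded algebra (its hypotheses hold because each $\phi_m^n$ is in particular a graded homomorphism). Recall that regularity of $A$ requires: (i) for every $k$ and every $k$-tuple $(g_1,\ldots,g_k)\in G^k$ there exist homogeneous $a_i\in A_{g_i}$ with $a_1\cdots a_k\neq 0$; and (ii) $\beta$-commutativity, $a_ga_h=\beta(g,h)a_ha_g$ for homogeneous $a_g\in A_g$, $a_h\in A_h$. The whole argument hinges on transferring the $n$-regularity of the individual $A_n$ up to the limit, and the bridge for this is the injectivity of the canonical maps $\varepsilon_n\colon A_n\to A$.

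First I would record the key consequence of hypothesis (1): each canonical map $\varepsilon_n$ is injective. Indeed, for a direct system over the directed set $\mathbb{N}$, an element $a\in A_n$ satisfies $\varepsilon_n(a)=0$ if and only if there is some $m\geq n$ with $\phi_m^n(a)=0$ (the standard description of the kernel of a canonical map into a direct limit of vector spaces; see \cite{rotman2009introduction}). Since every transition map $\phi_m^n$ is injective by assumption, $\phi_m^n(a)=0$ forces $a=0$, so $\varepsilon_n$ is injective. I would also note, using the product formula displayed right after Lemma \ref{graded.algebra}, that whenever elements $x_1,\ldots,x_k$ all lie in the image of a single $\varepsilon_n$, say $x_i=\varepsilon_n(r_i)$ with $r_i\in A_n$, one has $x_1\cdots x_k=\varepsilon_n(r_1\cdots r_k)$, by an immediate induction on $k$.

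With this in place, condition (i) is immediate. Given any $k$ and any $k$-tuple $(g_1,\ldots,g_k)\in G^k$, the algebra $A_k$ is $k$-regular by hypothesis (2), so there exist homogeneous $r_i\in (A_k)_{g_i}$ with $r_1\cdots r_k\neq 0$ in $A_k$. Setting $a_i:=\varepsilon_k(r_i)\in A_{g_i}$, the product formula gives $a_1\cdots a_k=\varepsilon_k(r_1\cdots r_k)$, which is nonzero because $\varepsilon_k$ is injective; this establishes (i). For (ii), take homogeneous $x=\varepsilon_i(a)$ with $a\in (A_i)_g$ and $y=\varepsilon_j(b)$ with $b\in (A_j)_h$, and choose $k\geq\max\{i,j\}$. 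Since $\phi_k^i$ and $\phi_k^j$ are graded, $\phi_k^i(a)\in (A_k)_g$ and $\phi_k^j(b)\in (A_k)_h$; as $A_k$ is $\beta$-commutative we get $\phi_k^i(a)\phi_k^j(b)=\beta(g,h)\phi_k^j(b)\phi_k^i(a)$, and applying $\varepsilon_k$ together with the definition of the product in $A$ yields $xy=\beta(g,h)yx$. Hence $A$ is $\beta$-commutative with the same bicharacter $\beta$, and combined with (i) this shows $A$ is a $G$-graded regular algebra with bicharacter $\beta$.

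The main obstacle is precisely the injectivity of the canonical maps $\varepsilon_n$: without it, the nonzero products produced inside each $A_k$ could collapse in the limit and condition (i) would fail. Everything else is a routine transfer of $\beta$-commutativity and of the products through the maps $\varepsilon_n$, using only that the transition maps are graded and that the product in the direct limit is computed after pushing both factors into a common $A_k$.
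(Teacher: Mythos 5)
Your proof is correct and follows essentially the same route as the paper: both push elements into a common $A_k$ to transfer $\beta$-commutativity, and both obtain condition (i) from the $k$-regularity of $A_k$ together with the kernel description of the canonical maps (an element dies in the limit iff some transition map kills it) and the injectivity of the $\phi_m^n$. The only difference is organizational — you isolate the injectivity of $\varepsilon_n$ as an upfront observation, while the paper inlines the same argument as a proof by contradiction applied to the specific product element.
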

\begin{proof} We shall use the same notations as in Lemma \ref{graded.algebra}. By that lemma we know that $A$ is a $G$-graded algebra and
\[
A=\bigoplus_{g\in G}\varinjlim (A_{n})_{g}.
\]

Given $g$, $h \in G$, let $a_{g} \in \varinjlim (A_{n})_{g}$ and $a_{h} \in \varinjlim (A_{n})_{h}$. By \cite[Lemma 5.30]{rotman2009introduction}, and Lemma \ref{graded.algebra} we can write $a_{g} =\varepsilon_{i}(a_{i}^{g})$ and $a_{h} =\varepsilon_{j}(a_{j}^{h})$, where $a_{i}^{g} \in (A_{i})_{g}$ and $a_{j}^{h} \in (A_{j})_{h}$. If $k\geq \max\{i,j\}$, it follows that
\begin{align*}
a_{g}\cdot a_{h} &= \varepsilon_{k}(\phi_{k}^{i}(a_{i}^{g})\phi_{k}^{j}(a_{j}^{h}))\\
& = \beta(g,h)\varepsilon_{k}(\phi_{k}^{j}(a_{j}^{h})\phi_{k}^{i}(a_{i}^{g}))\\
 &= \beta(g,h)a_{h}\cdot a_{g},
\end{align*}
and this implies that $A$ is a $G$-graded $\beta$-commutative algebra. 

Now we  show $A$ satisfies the first condition of regularity. Let $k \in \mathbb{N}$ and $(g_{1},\ldots, g_{k})\in G^{k}$. By the $k$-regularity of $A_{k}$, there exist elements $a_{k}^{g_{1}} \in (A_{k})_{g_{1}}$, \dots, $a_{k}^{g_{k}} \in (A_{k})_{g_{k}}$ such that  
\[
a_{k}^{g_{1}} \cdots a_{k}^{g_{k}} \neq 0.
\]  
Let $a := a_{k}^{g_{1}} \cdots a_{k}^{g_{k}}\in A_{k}$ and $\widetilde{g} = g_{1} + \cdots + g_{k}$, and consider  
\[
\overline{a} = \varepsilon_{k}(a) \in \varinjlim (A_{n})_{\widetilde{g}}.
\]
Suppose that $\overline{a} = 0$, then, by \cite[Lemma 5.30]{rotman2009introduction}, there exists $t\geq k$ such that $\phi_{t}^{k}(a) = 0$. On the other hand, since $\phi_{t}^{k}$ is injective, it follows that $a = 0$ which is a contradiction. We conclude that $A$ is a $G$-graded regular algebra with skew-symmetric bicharacter $\beta$.
\end{proof}

\begin{Example} Let $E_{n}$ be the Grassmann algebra generated by $n$ elements. Since for every $n\leq m$ we have  $E_{n}\subseteq E_{m}$, it follows that
\[
\varinjlim E_{n}= \bigcup_{n\in \mathbb{N}}E_{n}=E.
\]
\end{Example}

At the end of this paper, we show that, in fact, all infinite dimensional  $\mathbb{Z}_{2}$-graded regular gradings with minimal regular decomposition can be realized as direct limits (with respect to a directed set) of $m$-regular gradings.

 \section{A characterization of the graded variety generated by the Grassmann algebra}
 
Throughout this section, we let $A$ denote a $\mathbb{Z}_{2}$-graded regular algebra, and suppose that the regular decomposition of $A$ is minimal. Therefore, by Lemma \ref{remark.superalgebra} the skew-symmetric bicharacter $\beta\colon \mathbb{Z}_{2}\times \mathbb{Z}_{2}\rightarrow K^{\ast}$ of $A$ is given by
\[
\beta(0,0)=\beta(0,1)=\beta(1,0)=1,\quad \text{and}\quad \beta(1,1)=-1.
\]

Let $A=A_0\oplus A_1$ be a $\mathbb{Z}_2$-graded algebra, we denote by $E(A)=A_0\otimes E_0\oplus A_1\otimes E_1$ its Grassmann envelope. This construction is canonical in the definition of superalgebras of a variety of (not necessarily associative) algebras. While in the associative case it produces an associative algebra, this does not happen if one considers, for instance, Lie or Jordan, or alternative algebras. Thus one may call a $\mathbb{Z}_2$-graded associative algebra a \textsl{superalgebra}, both terms mean the same. But in the case of Lie (or Jordan, or whatever) algebras, a Lie superalgebra will be a $\mathbb{Z}_2$-graded algebra such that $E(A)$ is a Lie algebra. Clearly a Lie superalgebra seldom will be a Lie algebra. 

By \cite[Theorem 1.3]{representability} there exists a finite dimensional $\mathbb{Z}_{2}$-graded algebra $C=C_{0}\oplus C_{1}$ such that $T_{\mathbb{Z}_{2}}(E(C))=T_{\mathbb{Z}_{2}}(A)=T_{\mathbb{Z}_{2}}(E)$. At this point, it is important to mention that in \cite{kemer.1} and \cite{kemer.2}, Kemer proved that for any associative algebra $R$ over a field of characteristic $0$, there exists a finite dimensional $\mathbb{Z}_{2}$-graded algebra $S$ such that $T(R)=T(E(S))$.

Before stating the next result, we recall a simple but important fact about tensor products of vector spaces.
\begin{Remark}
\label{remark.tensor.product}
If $V$ and $W$ are vector spaces, then by \cite[Theorem 14.5]{roman2005advanced}, we have that $v \otimes w = 0$ in $V \otimes W$ if and only if $v = 0$ or $w = 0$.   
\end{Remark}

\begin{Lemma}
\label{equalitty.T.G}
Let $R$ be a $\mathbb{Z}_{2}$-graded algebra such that $T_{\mathbb{Z}_{2}}(A)=T_{\mathbb{Z}_{2}}(R)$. Then, $R$ is a $\mathbb{Z}_{2}$-graded regular algebra with minimal regular decomposition. 
\end{Lemma}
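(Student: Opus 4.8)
The plan is to verify, one by one, the two conditions of Definition~\ref{def.regular.gradings} together with the minimality condition, translating each of them into a statement about membership (or non‑membership) of an explicit polynomial in the relevant $T_{\mathbb{Z}_2}$‑ideal. The starting observation is that, by Lemma~\ref{remark.superalgebra}, one has $T_{\mathbb{Z}_2}(A)=T_{\mathbb{Z}_2}(E)$, so the hypothesis yields $T_{\mathbb{Z}_2}(R)=T_{\mathbb{Z}_2}(E)$. Consequently every graded identity of $E$ holds in $R$, and conversely every graded \emph{non}identity of $E$ remains a nonidentity of $R$. The whole argument exploits only this equality, so no explicit generating set of $T_{\mathbb{Z}_2}(E)$ is required.

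First I would establish condition (ii), the $\beta$-commutativity, with $\beta$ equal to the Grassmann bicharacter. It suffices to check that the four polynomials
\[
x^{(0)}y^{(0)}-y^{(0)}x^{(0)},\quad x^{(0)}y^{(1)}-y^{(1)}x^{(0)},\quad x^{(1)}y^{(0)}-y^{(0)}x^{(1)},\quad x^{(1)}y^{(1)}+y^{(1)}x^{(1)}
\]
are graded identities of $E$. The first three hold because $E_0$ is the centre of $E$; the last holds because any two homogeneous odd elements anticommute (on monomials of odd length $uv=(-1)^{|u||v|}vu=-vu$, and one extends by bilinearity). Since these polynomials lie in $T_{\mathbb{Z}_2}(E)=T_{\mathbb{Z}_2}(R)$, they are identities of $R$, whence for all $a_g\in R_g$, $a_h\in R_h$ one has $a_ga_h=\beta(g,h)a_ha_g$ with $\beta(0,0)=\beta(0,1)=\beta(1,0)=1$ and $\beta(1,1)=-1$. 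Thus $R$ is $\beta$-commutative and $M^{R}=M^{E}$.

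Next I would establish condition (i), the regularity. Fix an $n$-tuple $(g_1,\dots,g_n)\in\mathbb{Z}_2^n$ and consider the multilinear monomial $w=y_1\cdots y_n$ in pairwise distinct variables, where $y_i$ is homogeneous of degree $g_i$. The key point is that $w\notin T_{\mathbb{Z}_2}(E)$: substituting, with pairwise distinct indices throughout, a single generator $e_c$ for each odd slot and a product $e_ae_b$ of two generators for each even slot produces, up to sign, a product of \emph{distinct} generators of $E$, which after reordering is a nonzero basis element. Hence $w$ is not an identity of $E$, and since $T_{\mathbb{Z}_2}(R)=T_{\mathbb{Z}_2}(E)$ it is not an identity of $R$ either; therefore there exist homogeneous $a_i\in R_{g_i}$ with $a_1\cdots a_n\neq 0$, which is exactly condition (i).

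Finally, minimality is immediate. Having shown $M^{R}=M^{E}=\left(\begin{smallmatrix}1&1\\1&-1\end{smallmatrix}\right)$, the only candidate pair for nonminimality in $\mathbb{Z}_2$ is $h=0$, $k=1$, and it fails because $\beta(1,0)=1\neq -1=\beta(1,1)$; equivalently $\det M^{R}=-2\neq 0$, so by \cite[Theorem~7]{Eli1} the decomposition is minimal. I expect the main (and essentially only) obstacle to be condition (i): one must guarantee that an arbitrary prescribed degree sequence is realized by a nonzero product, and the cleanest device is to reduce this to the nonvanishing of an explicit Grassmann monomial; the remaining steps are straightforward translations between graded identities and the defining relations of a regular grading.
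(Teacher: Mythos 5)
Your proof is correct and follows essentially the same route as the paper's: reduce via Lemma~\ref{remark.superalgebra} to $T_{\mathbb{Z}_2}(R)=T_{\mathbb{Z}_2}(E)$, obtain condition (i) from the fact that $E$ has no graded monomial identities, obtain condition (ii) from the commutation and anticommutation identities of $E$, and conclude minimality from $M^R=M^E$. The only (harmless) difference is that you verify directly that the needed polynomials are identities of $E$ and that graded monomials are nonidentities, whereas the paper cites the known generating set of $T_{\mathbb{Z}_2}(E)$ and asserts the absence of monomial identities without proof.
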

\begin{proof} 
Since, by our standing assumption, $A$ is a $\mathbb{Z}_2$-graded regular algebra, and its regular decomposition is minimal, $T_{\mathbb{Z}_{2}}(E)=T_{\mathbb{Z}_{2}}(A)$. It follows that $R$ satisfies the same graded identities as the Grassmann algebra $E$. Since $E$ has no graded monomial identities, we conclude that $R$ likewise has no graded monomial identities. Thus, $R=R_{0}\oplus R_{1}$ satisfies the first condition of regularity.  

By \cite[Proposition 2]{giambruno2001polynomial}, $T_{\mathbb{Z}_{2}}(E)$ is generated (as a $T_{G}$-ideal) by the graded polynomials
\[
[x_{1}^{(0)},x_{2}^{(0)}],\quad [x_{1}^{(0)},x_{1}^{(1)}],\quad \text{and}\quad x_{1}^{(1)}x_{2}^{(1)}+x_{2}^{(1)}x_{1}^{(1)}.
\]
Therefore, for any $a_{0}$, $b_{0}\in R_{0}$, $a_{1}$, $b_{1}\in R_{1}$ we get
\[
a_{0}b_{0}=b_{0}a_{0},\quad a_{0}b_{1}=b_{1}a_{0},\quad \text{and}\quad a_{1}b_{1}=-b_{1}a_{1}
\]
which implies $R$ satisfies the second condition of regularity. Hence $R$ is a $\mathbb{Z}_{2}$-graded regular algebra whose regular decomposition is minimal since $M^{R}=M^{E}$.
\end{proof}
We observe here that the equality of the ideals of graded identities of two algebras implies the equality of the ideals of their ordinary ones. Therefore $T_{\mathbb{Z}_{2}}(A)=T_{\mathbb{Z}_{2}}(R)$ implies $T(A)=T(R)=T(E)$, that is the ordinary identities of $R$, and of $A$, follow from the triple commutator, namely from the identity $[[x_1,x_2], x_3]$. 
\begin{Lemma} 
\label{commutative}
The $\mathbb{Z}_{2}$-graded algebra $C$ is regular and commutative. 
\end{Lemma}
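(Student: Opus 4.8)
The plan is to translate the structural identities that generate $T_{\mathbb{Z}_{2}}(E)$, and which $E(C)$ now satisfies, into statements about products inside $C$, exploiting the tensor decomposition $E(C)=C_{0}\otimes E_{0}\oplus C_{1}\otimes E_{1}$ together with Remark \ref{remark.tensor.product}. First I would record that, since $T_{\mathbb{Z}_{2}}(E(C))=T_{\mathbb{Z}_{2}}(A)=T_{\mathbb{Z}_{2}}(E)$, Lemma \ref{equalitty.T.G} applies to $R=E(C)$, so $E(C)$ is itself a $\mathbb{Z}_{2}$-graded regular algebra with minimal regular decomposition. In particular $E(C)$ satisfies the three generators $[x_{1}^{(0)},x_{2}^{(0)}]$, $[x_{1}^{(0)},x_{1}^{(1)}]$ and $x_{1}^{(1)}x_{2}^{(1)}+x_{2}^{(1)}x_{1}^{(1)}$ of $T_{\mathbb{Z}_{2}}(E)$, and it has no graded monomial identities.

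For commutativity I would substitute into each generator homogeneous elements of $E(C)$ of the form $c\otimes u$ with $c\in C_{g}$ and $u\in E_{g}$ a conveniently chosen basis monomial, and then strip off the Grassmann factor using Remark \ref{remark.tensor.product}. Evaluating $[x_{1}^{(0)},x_{2}^{(0)}]$ on $a_{0}\otimes 1$ and $b_{0}\otimes 1$ yields $[a_{0},b_{0}]\otimes 1=0$, so $[a_{0},b_{0}]=0$; evaluating $[x_{1}^{(0)},x_{1}^{(1)}]$ on $a_{0}\otimes 1$ and $a_{1}\otimes e_{1}$ yields $[a_{0},a_{1}]\otimes e_{1}=0$, so $[a_{0},a_{1}]=0$; and evaluating the third generator on $a_{1}\otimes e_{1}$ and $b_{1}\otimes e_{2}$, where the relation $e_{2}e_{1}=-e_{1}e_{2}$ turns the anticommutator in $E(C)$ into a commutator, gives $[a_{1},b_{1}]\otimes e_{1}e_{2}=0$, so $[a_{1},b_{1}]=0$. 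Hence $[C_{0},C_{0}]=[C_{0},C_{1}]=[C_{1},C_{1}]=0$, i.e. $C=C_{0}\oplus C_{1}$ is commutative; in particular its bicharacter is trivial, so condition (ii) of regularity holds for $C$.

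For the first condition of regularity I would argue by contraposition, using that $E(C)$ has no graded monomial identities. If $C$ failed condition (i), there would be a tuple $(g_{1},\dots,g_{n})\in\mathbb{Z}_{2}^{n}$ with $C_{g_{1}}\cdots C_{g_{n}}=0$. Since multiplication in $E(C)$ is componentwise on tensors, any product of elements of $E(C)_{g_{1}},\dots,E(C)_{g_{n}}$ expands into a sum of terms whose $C$-parts lie in $C_{g_{1}}\cdots C_{g_{n}}=0$, forcing $E(C)_{g_{1}}\cdots E(C)_{g_{n}}=0$; that is, $E(C)$ would satisfy the monomial graded identity $x_{1}^{(g_{1})}\cdots x_{n}^{(g_{n})}$, contradicting its regularity. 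Hence $C$ satisfies (i), and together with commutativity this shows $C$ is regular and commutative.

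I expect the regularity step to be the point requiring most care, since one must handle arbitrary (not merely rank-one) homogeneous elements of $E(C)$ and verify that the vanishing of $C_{g_{1}}\cdots C_{g_{n}}$ genuinely forces the entire product in $E(C)$ to vanish; the commutativity step is essentially a direct computation, the only subtlety being the insertion of the correct Grassmann factors so that the surviving tensor is nonzero and Remark \ref{remark.tensor.product} can be applied.
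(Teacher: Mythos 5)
Your proof is correct and takes essentially the same route as the paper's: both use Lemma \ref{equalitty.T.G} to conclude that $E(C)$ is regular, extract commutativity of $C$ by evaluating on pure tensors and stripping off nonzero Grassmann factors via Remark \ref{remark.tensor.product}, and obtain condition (i) for $C$ by contradiction from the fact that $E(C)$ admits no graded monomial identities. If anything, your expansion of arbitrary (non-pure-tensor) homogeneous elements in the regularity step is slightly more careful than the paper's computation, which is displayed only for pure tensors.
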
 
\begin{proof}
We recall that $C$ is a finite dimensional $\mathbb{Z}_{2}$-graded algebra such that $T_{\mathbb{Z}_{2}}(E(C))=T_{\mathbb{Z}_{2}}(A)=T_{\mathbb{Z}_{2}}(E)$. 
Since $T_{\mathbb{Z}_{2}}(E(C))=T_{\mathbb{Z}_{2}}(A)$, it follows from Lemma \ref{equalitty.T.G} that $E(C)$ is a $\mathbb{Z}_{2}$-graded regular algebra with skew-symmetric bicharacter $\beta$. First we will show that $C$ is commutative.  Take $0\neq a\in E_{0}$, $0\neq e\in E_{1}$, $c_{0}$, $c_{0}'\in C_{0}$, $c_{1}\in C_{1}$, then we have
\begin{equation}
\label{e.1}
    a\otimes (c_{0}c_{0}'-c_{0}'c_{0})=(a\otimes c_{0})(1\otimes c_{0}')-(1\otimes c_{0}')(a\otimes c_{0})=0,
\end{equation}
\begin{equation}
\label{e.2}
    e\otimes (c_{0}'c_{1}-c_{1}c_{0}')=(1\otimes c_{0}')(e\otimes c_{1})-(e\otimes c_{1})(1\otimes c_{0}')=0,
\end{equation}
so, by Remark \ref{remark.tensor.product}, the equations (\ref{e.1}) and (\ref{e.2}) imply respectively that $c_{0}c_{0}'=c_{0}'c_{0}$ and $c_{0}'c_{1}=c_{1}c_{0}'$. Hence, by arbitrariness of $c_{0}$, $c_{0}'\in C_{0}$, $c_{1}\in C_{1}$, we have $C_{0}$ is commutative and commutes with $C_{1}$. Let us show that $C_{1}$ is commutative.  Let $c_{1}$, $c_{1}'\in C_{1}$, $0\neq e\in E_{1}$ and take $0\neq e'\in E_{1}$ such that $ee'\neq 0$, then we have
\begin{align*}
    (ee'\otimes (c_{1}c_{1}'-c_{1}'c_{1})) &= (e\otimes c_{1})(e'\otimes c_{1}')-(e\otimes c_{1}')(e'\otimes c_{1})\\
    &= (e\otimes c_{1})(e'\otimes c_{1}')-(ee'\otimes c_{1}'c_{1})\\
    &= (e\otimes c_{1})(e'\otimes c_{1}')+(e'e\otimes c_{1}'c_{1})\\
    &=  (e\otimes c_{1})(e'\otimes c_{1}')+(e'\otimes c_{1}')(e\otimes c_{1})=0.
    \end{align*}
  Once again, by Remark \ref{remark.tensor.product}, it follows that $c_{1}c_{1}' = c_{1}'c_{1}$. Therefore, since $c_{1}$, $c_{1}' \in C_{1}$ were arbitrary, we conclude that $C_{1}$ is commutative. Hence, $C$ is commutative.
    
    Now, suppose $C$ is not regular. Then there exists a $k$-tuple $(u_{1},\ldots, u_{k})\in (\mathbb{Z}_{2})^{k}$ such that $c_{1}\cdots c_{k}=0$, for every $c_{i}\in C_{u_{i}}$. Therefore, for every $a_{i}\in E_{u_{i}}$, $1\leq i\leq k$, we obtain
    \[
    (a_{1}\otimes c_{1})\cdots (a_{k}\otimes c_{k})=a_{1}\cdots a_{k}\otimes c_{1}\cdots c_{k}=0
    \]
which is a contradiction because $E(C)$ is regular. Consequently, $C$ is a commutative $\mathbb{Z}_{2}$-graded algebra. 
    \end{proof}

Before proceeding, let us recall the classification of the finite dimensional $\mathbb{Z}_{2}$-graded simple algebras obtained by C. T. C. Wall in his 1963 paper \cite{wall}.

\begin{Theorem}
\label{classification}
\cite{wall} Let $A$ be a finite dimensional $\mathbb{Z}_{2}$-graded simple algebra. Then, $A$ is isomorphic to one of the $\mathbb{Z}_{2}$-graded algebras:
\begin{enumerate}
    \item $A_{1}:=M_{n}(K)$, with the trivial grading.
    \item $A_{2}:=M_{k,l}(K)=\Bigg\{\begin{pmatrix}
	P & Q\\
	R & S
	\end{pmatrix}\mid P\in M_{k}(K),Q\in M_{k,l}(K),R\in M_{l,k}(K), S\in M_{l}(K)\Bigg\}$, for some $k\geq l>0$, where the $\mathbb{Z}_{2}$-grading is given by
    \[
    (A_{2})_{0}=\Bigg\{\begin{pmatrix}
	P & 0\\
	0 & S
	\end{pmatrix}\Bigg\}, \qquad 
	(A_{2})_{1}=\Bigg\{ \begin{pmatrix}
		0 & Q\\
		R & 0
		\end{pmatrix}\Bigg\}.
		\]
    \item $A_{3}:=M_{n}(K\oplus cK)$, with $c^{2}=1$, and  $\mathbb{Z}_{2}$-grading given by
	$(A_{3})_{0}=M_{n}(K)$, 
	$(A_{3})_{1}=cM_{n}(K)$.
    \end{enumerate}
\end{Theorem}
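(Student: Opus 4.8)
The plan is to follow the standard route for classifying group-graded simple associative algebras, specialized to $G=\mathbb{Z}_2$ over an algebraically closed field of characteristic $0$. First I would pass to the dual picture. Since $K$ is algebraically closed of characteristic $0$, the $\mathbb{Z}_2$-grading $A=A_0\oplus A_1$ is equivalent to the datum of an algebra automorphism $\phi$ of order dividing two, namely the one attached to the nontrivial character $\chi\colon \mathbb{Z}_2\to K^{\ast}$ with $\chi(1)=-1$; concretely $\phi$ acts as $+1$ on $A_0$ and as $-1$ on $A_1$, and one checks directly that $\phi(a_ga_h)=\chi(g)\chi(h)a_ga_h=\phi(a_g)\phi(a_h)$, so $\phi$ is an automorphism with $\phi^2=\mathrm{id}$. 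Because $\phi$ is diagonalizable with eigenspaces exactly $A_0$ and $A_1$, a subspace is homogeneous precisely when it is $\phi$-invariant. Thus ``graded simple'' translates into ``$A$ has no proper nonzero $\phi$-invariant two-sided ideal.''

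Next I would establish semisimplicity. The Jacobson radical $J(A)$ is invariant under every automorphism of $A$, in particular under $\phi$, whence $J(A)=(J(A)\cap A_0)\oplus (J(A)\cap A_1)$ is a graded ideal. Graded simplicity (together with $A^2\neq 0$) forces $J(A)=0$, so by Wedderburn's theorem and algebraic closure $A\cong M_{n_1}(K)\times\cdots\times M_{n_r}(K)$. The automorphism $\phi$ permutes the minimal two-sided ideals (the simple factors), and since $\phi^2=\mathrm{id}$ these orbits have size $1$ or $2$; the sum of the factors in any single orbit is a $\phi$-invariant ideal, so graded simplicity leaves exactly one orbit.

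In the single-orbit case $A\cong M_n(K)$ with $\phi$ an automorphism satisfying $\phi^2=\mathrm{id}$. By the Skolem--Noether theorem $\phi$ is inner, $\phi(x)=gxg^{-1}$, and $\phi^2=\mathrm{id}$ makes $g^2$ central, $g^2=\lambda I$; rescaling $g$ (using that $K$ is algebraically closed) I may take $g^2=I$, so $g$ is a diagonalizable involution conjugate to $\mathrm{diag}(I_k,-I_l)$ with $k+l=n$, and after relabeling $k\geq l$. If $l=0$ then $\phi=\mathrm{id}$ and the grading is trivial, giving type $A_1$; if $l>0$ the $\pm1$-eigenspaces of conjugation by $g$ give exactly the checkerboard grading $M_{k,l}(K)$, type $A_2$. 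In the two-orbit case $\phi$ swaps two isomorphic factors, so $A\cong M_n(K)\times M_n(K)$ and, after adjusting coordinates in one factor so that the swap isomorphism becomes the identity, $\phi$ is the flip $(x,y)\mapsto(y,x)$; then $A_0=\{(x,x)\}$ and $A_1=\{(x,-x)\}$. Identifying $K\times K$ with $K\oplus cK$, $c^2=1$, via $c\mapsto(1,-1)$ turns this into $M_n(K\oplus cK)$ with $(A_3)_0=M_n(K)$ and $(A_3)_1=cM_n(K)$, which is type $A_3$.

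The step I expect to require the most care is the passage between the grading and the involution $\phi$ together with the normalizations inside each case: verifying cleanly that $\phi$ is an algebra automorphism whose eigenspaces recover the grading, and, in the single-orbit case, carrying out the Skolem--Noether normalization so that the $\pm1$-eigenspace decomposition of $\mathrm{ad}\,g$ matches the prescribed block form of $M_{k,l}(K)$. Once the orbit structure of $\phi$ on the simple factors is pinned down, the remaining identifications are routine linear algebra.
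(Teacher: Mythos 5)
The paper contains no proof of this statement at all: it is quoted as a classical result with a citation to Wall's 1963 paper \cite{wall}, and is used later only as a black box (in Lemma \ref{aux.Lemma.1}, where commutativity cuts the list down to $K$ and $K\oplus cK$). So there is no internal argument to compare against; what can be assessed is whether your blind proof is correct, and it is. Your route is the standard modern proof over an algebraically closed field of characteristic $0$ (it is essentially the argument in the Giambruno--Zaicev monograph \cite{GZbook}, which the paper cites for the graded Wedderburn--Malcev theorem): since $\operatorname{char}K\neq 2$, the grading is equivalent to an automorphism $\phi$ with $\phi^{2}=\mathrm{id}$, diagonalizable with eigenspaces $A_{0}$, $A_{1}$, so graded ideals are exactly the $\phi$-invariant ones; $J(A)$ is characteristic, hence graded, hence zero (the paper's algebras are unital, so $J(A)\neq A$ is automatic); Wedderburn--Artin plus the orbit argument on the simple factors splits the analysis into the inner case and the swap case; Skolem--Noether with the rescaling $g^{2}=I$ (possible since $K$ is algebraically closed) yields type $A_{1}$ when $l=0$ and type $A_{2}$ when $l>0$, with $k\geq l$ attainable because $\mathrm{Ad}_{g}=\mathrm{Ad}_{-g}$; and in the swap case the coordinate change $(x,y)\mapsto(x,\psi^{-1}(y))$ (where $\phi(x,y)=(\psi(y),\psi^{-1}(x))$) does reduce $\phi$ to the flip, after which the central element $c=(1,-1)$, $c^{2}=1$, identifies $A$ with $M_{n}(K\oplus cK)$. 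What your proof buys over the paper's bare citation is self-containedness within the paper's standing hypotheses; what it gives up is generality, since Wall's original theorem covers non-algebraically-closed base fields, where graded division algebras enter and the Skolem--Noether normalization you rely on is no longer available in this clean form --- but the algebraically closed, characteristic-$0$ case is all this paper uses.
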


\begin{Lemma}
\label{aux.Lemma.1}
If $C$  is a finite dimensional $\mathbb{Z}_{2}$-graded algebra such that $T_{\mathbb{Z}_{2}}(A)=T_{\mathbb{Z}_{2}}(E(C))$, then there exist $r\geq 0$ and $p\geq 1$ such that
    \[
    C\cong \underbrace{K\oplus \cdots \oplus K}_{r}\oplus \underbrace{(K\oplus c_{r+1}K)\oplus \cdots \oplus(K\oplus c_{r+p}K)}_{p}\oplus J(C).
    \]
   where, if $W_{i}\cong K\oplus c_{i}K$, then $c_{i}^{2}=1_{W_{i}}$, the unit of $W_i$, for all $r+1\leq i\leq r+p$. 

\end{Lemma}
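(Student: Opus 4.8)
The plan is to combine the graded Wedderburn--Malcev principal theorem with Wall's classification in Theorem~\ref{classification}, using the commutativity of $C$ to reduce the list to two algebras and using regularity to force $p\geq 1$.

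First I would observe that, since $T_{\mathbb{Z}_2}(A)=T_{\mathbb{Z}_2}(E(C))$ and $T_{\mathbb{Z}_2}(A)=T_{\mathbb{Z}_2}(E)$ by our standing assumption, the argument of Lemma~\ref{commutative} applies to the present $C$, so that $C$ is a finite-dimensional, commutative, regular $\mathbb{Z}_2$-graded algebra. As $\operatorname{char}K=0$, the Jacobson radical $J(C)$ is a graded ideal and there is a graded semisimple subalgebra $C_{ss}$ with $C=C_{ss}\oplus J(C)$ as graded vector spaces. This is the structural backbone on which everything else rests.

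Next, $C_{ss}$ is graded semisimple and finite dimensional, hence a direct sum of graded-simple subalgebras, each of which is commutative because $C_{ss}\subseteq C$ is. I would then run through Wall's list. The degree-$0$ component of a commutative graded-simple summand must itself be commutative, which kills $A_1=M_n(K)$ and $A_3=M_n(K\oplus cK)$ unless $n=1$, and forces $k=l=1$ in $A_2=M_{k,l}(K)$; but $M_{1,1}(K)$ is noncommutative, since $E_{12}E_{21}=E_{11}\neq E_{22}=E_{21}E_{12}$, so no summand of type $A_2$ survives. Thus a commutative graded-simple summand is either $K$ concentrated in degree $0$, or a two-dimensional algebra $W_i=K\oplus c_iK$ with $(W_i)_0=K$, $(W_i)_1=c_iK$ and $c_i^2=1_{W_i}$ (the $A_3$ case with $n=1$). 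Writing $r$ for the number of summands of the first type and $p$ for the number of summands of the second type, I obtain the claimed decomposition of $C_{ss}$, and hence of $C=C_{ss}\oplus J(C)$.

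Finally, to establish $p\geq 1$ I would invoke regularity. If $p=0$, then $C_{ss}$ lies entirely in degree $0$, whence $C_1\subseteq J(C)$; since $J(C)$ is nilpotent, say $J(C)^N=0$, every product of $N$ homogeneous elements of degree $1$ vanishes, contradicting the existence of a nonzero such product required by condition (i) of regularity for the tuple $(1,\dots,1)\in\mathbb{Z}_2^N$. Hence $p\geq 1$ while $r\geq 0$ trivially, as claimed. The one step needing care is the appeal to the graded Wedderburn--Malcev theorem, namely that the semisimple complement can be chosen homogeneous; this holds in characteristic $0$, and once it is in place the remaining steps follow immediately from commutativity, Wall's list, and nilpotency of the radical.
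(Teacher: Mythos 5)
Your proof is correct, and it reaches the conclusion $p\geq 1$ by a genuinely different (and more economical) route than the paper. Both proofs share the same structural backbone: commutativity of $C$ (via Lemma~\ref{commutative}), the graded Wedderburn--Malcev decomposition $C=C'\oplus J(C)$, and Wall's classification reducing the graded simple summands to $K$ and $K\oplus c_iK$. The difference lies in how $p\geq 1$ is forced. The paper invokes Theorem~\ref{main.embedding} to produce an injective graded homomorphism $\gamma\colon E\to E(C)$, expands each $\gamma(e_i)$ as a sum of tensors $v_{t,i}\otimes u_{t,i}$ with $u_{t,i}\in C_1$, and argues that if every $u_{t,i}$ lay in $J(C)$ then nilpotency of $J(C)$ would kill $\gamma(e_1\cdots e_{r'})$ for long products, contradicting injectivity; if all $W_i\cong K$ then $C_1=J(C)_1$ and this failure occurs. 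You instead exploit the fact that Lemma~\ref{commutative} establishes not only commutativity but also \emph{regularity} of $C$ (a fact the paper proves but then does not reuse in this lemma): if $p=0$ then $C_1\subseteq J(C)$, so every product of $N$ degree-$1$ elements lies in $J(C)^N=0$, contradicting condition (i) of regularity for the tuple $(1,\dots,1)\in\mathbb{Z}_2^N$. Your route avoids the direct-limit machinery of Theorem~\ref{main.embedding} entirely and uses only facts already at hand, while the paper's route makes the embedding $E\hookrightarrow E(C)$ do the work; the underlying contradiction (nilpotency of the radical versus nonvanishing long products in degree $1$) is the same in both, but yours extracts it from $C$ directly rather than from the image of $E$ in $E(C)$. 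Also, your handling of Wall's list is slightly more detailed than the paper's (which silently discards $M_{k,l}(K)$), and your observation that $M_{1,1}(K)$ is noncommutative correctly closes that case.
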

\begin{proof} 
Since $E(C)$ is a $\mathbb{Z}_{2}$-graded regular algebra with the same skew-symmetric bicharacter as $A$, it follows that the regular decomposition of $E(C)$ is minimal and $\operatorname{Supp}_{\mathbb{Z}_{2}}(E(C))=\mathbb{Z}_{2}$ and, in particular, $C_{1}\neq 0$. 

By the Wedderburn–Malcev theorem for finite dimensional $\mathbb{Z}_{2}$-graded algebras \cite[Theorem 3.5.4]{GZbook}, we can write $C = C' \oplus  J(C)$, a direct sum of vector spaces, where $C'$ is a direct sum of graded simple algebras, and $J(C)$ is the Jacobson radical of $C$. By Lemma \ref{commutative}, $C$ is commutative, so, by Theorem \ref{classification}, we may write
\[
C'=W_{1} \oplus \cdots \oplus W_{s}
\]
where each $W_{i}$ is graded isomorphic to either $K$ or $K \oplus c_{i}K$, with $c_{i}^{2} = 1$. 

Notice that
\[
C_{0}=(W_{1})_{0}\oplus \cdots \oplus (W_{s})_{0}\oplus J(C)_{0},\quad \text{and}\quad C_{1}=(W_{1})_{1}\oplus \cdots \oplus (W_{s})_{1}\oplus J(C)_{1}.
\]
Now, since $\dim C < \infty$, there exists $q \in \mathbb{N}$ such that $J(C)^q = 0$. Given $\ell>q$, by Lemma \ref{Grassmann.finite} there exists a graded embedding $\gamma\colon E_{\ell}\rightarrow E(C)$, where $E_{\ell}$ is the Grassmann algebra finitely generated by $e_{1}$,\dots, $e_{\ell}$.  

Given indices $1 \leq i_{1} < \cdots < i_{k}\leq \ell$, denote $f_{i_{1}, \ldots, i_{k}} := e_{i_{1}} \cdots e_{i_{k}}$. Then, for every $i\leq \ell$, we have
\begin{equation}
\label{aux.1}
    \gamma(e_{i}) = \sum_{t} v_{t,i} \otimes u_{t,i},
\end{equation}
where each $v_{t,i}$ may be assumed to be of the form $f_{i_{1}, \ldots, i_{k}}$, and the $u_{t,i} \in C_{1}$ are linearly independent elements. 

We claim that there exists a non-zero $u_{t,j}\notin J(C)$ for some $j\leq \ell$ and some $t$. 
 Otherwise, given $q<r'\leq \ell$, we will have
\[
\gamma(e_{1} \cdots e_{r'}) = \gamma(e_{1}) \cdots \gamma(e_{r'}) = \sum_{t} (\star) \otimes\Big( \underbrace{*\cdots *}_{r'\text{ elements from } J(C)}\Big) = 0.
\]
Because $\gamma$ is injective, it follows that $e_{1} \cdots e_{r'} = 0$, which is a contradiction. Now, if for every $1\leq i\leq s$, $W_{i}$ is graded isomorphic to $K$, then by Theorem \ref{classification}, we get $C_{1}=J(C)_{1}$. Thus, for every $j \in \mathbb{N}$, the elements $u_{t,j}$ appearing in (\ref{aux.1}) would necessarily belong to $J(C)$, which contradicts what we have just shown.

Consequently, there exists at least one $W_{i}$ that is graded isomorphic to $K \oplus c_{i}K$, with $c_{i}^{2} = 1_{W_{i}}$. In particular, if we set
\[
r:= \#\{1\leq i\leq s\mid W_{i}\cong K\}|,\quad \text{and}\quad p:=\#\{1\leq i\leq s\mid W_{i}\cong K\oplus c_{i}K,\quad c_{i}^{2}=1_{W_{i}}\}
\]
then, we conclude $p\geq 1$. Furthermore $r+p=s$.

Therefore, we get
 \[
    C\cong \underbrace{K\oplus \cdots \oplus K}_{r}\oplus \underbrace{(K\oplus c_{r+1}K)\oplus \cdots \oplus (K\oplus c_{r+p}K)}_{p}\oplus J(C).\qedhere
\]
\end{proof}

\begin{Lemma}
\label{aux.Lemma.2}
Let $D$ be a finite dimensional commutative $\mathbb{Z}_{2}$-graded regular algebra. Then the following statements hold:
	\begin{enumerate}
		\item[$(a)$]  $E(D)$ is a $\mathbb{Z}_{2}$-graded regular algebra with minimal regular decomposition. In particular $T_{\mathbb{Z}_{2}}(E(D))=T_{\mathbb{Z}_{2}}(E)$.

		\item[$(b)$] $D$ contains at least one graded simple subalgebra graded isomorphic to $K \oplus cK$, with $c^{2}=1$.    
	\end{enumerate}
\end{Lemma}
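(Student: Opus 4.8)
The plan is to treat the two items separately, in both cases exploiting the explicit bicharacter $\beta$ of the Grassmann algebra recorded in Lemma \ref{remark.superalgebra}, namely $\beta(0,0)=\beta(0,1)=\beta(1,0)=1$ and $\beta(1,1)=-1$. For part $(a)$, I would first compute the bicharacter of $E(D)$ on pure tensors. Writing $E(D)_0=D_0\otimes E_0$ and $E(D)_1=D_1\otimes E_1$, and taking homogeneous pure tensors $d_g\otimes x_g$, $d_h\otimes x_h$ with $d_\ast\in D_\ast$ and $x_\ast\in E_\ast$, the product in $E(D)$ is $(d_g\otimes x_g)(d_h\otimes x_h)=d_gd_h\otimes x_gx_h$. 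Since $D$ is commutative we have $d_gd_h=d_hd_g$, while in $E$ we have $x_gx_h=\beta(g,h)x_hx_g$; combining these two facts gives $(d_g\otimes x_g)(d_h\otimes x_h)=\beta(g,h)(d_h\otimes x_h)(d_g\otimes x_g)$. By bilinearity the same identity extends to all homogeneous elements, so $E(D)$ is $\beta$-commutative with precisely the bicharacter of $E$.

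To verify the nondegeneracy condition $(i)$ of regularity for a tuple $(g_1,\ldots,g_n)$, I would use regularity of $D$ to choose $d_i\in D_{g_i}$ with $d_1\cdots d_n\neq 0$, and regularity of $E$ to choose $x_i\in E_{g_i}$ with $x_1\cdots x_n\neq 0$, and then set $a_i:=d_i\otimes x_i\in E(D)_{g_i}$. Then $a_1\cdots a_n=(d_1\cdots d_n)\otimes(x_1\cdots x_n)$ is nonzero by Remark \ref{remark.tensor.product}. Hence $E(D)$ is $\mathbb{Z}_2$-graded regular with $M^{E(D)}=M^E$. Since minimality depends only on $\beta$ and the decomposition of $E$ is minimal (indeed $\det M^E\neq 0$), the decomposition of $E(D)$ is minimal as well; and $M^{E(D)}=M^E$ together with \cite[Lemma 31]{Eli1} yields $T_{\mathbb{Z}_2}(E(D))=T_{\mathbb{Z}_2}(E)$, completing $(a)$.

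For part $(b)$ the key idea is to play regularity against nilpotency of the radical. Applying the graded Wedderburn--Malcev theorem \cite[Theorem 3.5.4]{GZbook} I would write $D=D'\oplus J(D)$ with $D'$ a direct sum of graded simple algebras; since $D$ is commutative, Wall's classification (Theorem \ref{classification}) forces each simple summand to be graded isomorphic either to $K$ with the trivial grading, or to $K\oplus c_iK$ with $c_i^2=1$. Suppose, for contradiction, that every summand is of the first type. Then $D'_1=0$, so $D_1=J(D)_1\subseteq J(D)$. As $D$ is finite dimensional, $J(D)$ is nilpotent, say $J(D)^m=0$. But regularity applied to the tuple $(1,\ldots,1)$ of length $m$ produces $d_1,\ldots,d_m\in D_1$ with $d_1\cdots d_m\neq 0$; since all $d_i\in J(D)$, this product lies in $J(D)^m=0$, a contradiction. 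Therefore at least one summand is graded isomorphic to $K\oplus cK$ with $c^2=1$, and, being a direct factor of $D'$, it is a graded simple subalgebra of $D$, as required.

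I expect the only delicate step to be the organization of part $(b)$: one must argue that the odd part of the semisimple component cannot vanish, and the clean way to see this is exactly the nilpotency-versus-regularity dichotomy above, since regularity guarantees arbitrarily long nonzero products of odd elements which cannot all be absorbed into a nilpotent ideal. Everything else reduces to the bilinear bicharacter computation and the nonvanishing criterion for tensors in Remark \ref{remark.tensor.product}. I note in passing that, having established $(a)$, one could instead observe that $T_{\mathbb{Z}_2}(E(D))=T_{\mathbb{Z}_2}(E)$ places $D$ under the hypotheses of Lemma \ref{aux.Lemma.1}, whose conclusion $p\geq 1$ gives the same result; but the direct argument above is self-contained and does not rely on the standing algebra of the section.
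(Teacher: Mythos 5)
Your proof is correct. Part $(a)$ is essentially the paper's argument: compute the bicharacter on pure tensors using commutativity of $D$ and $\beta$-commutativity of $E$, verify condition (i) of regularity by tensoring nonzero products from $D$ and $E$ and invoking Remark \ref{remark.tensor.product}, and conclude $T_{\mathbb{Z}_2}(E(D))=T_{\mathbb{Z}_2}(E)$ from $M^{E(D)}=M^{E}$. Part $(b)$, however, is where you genuinely diverge. The paper proves $(b)$ in one line, by combining item $(a)$ with Lemma \ref{aux.Lemma.1}: since $T_{\mathbb{Z}_2}(E(D))=T_{\mathbb{Z}_2}(A)$, that lemma's decomposition applies to $D$ and its conclusion $p\geq 1$ gives the simple summand $K\oplus cK$. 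But Lemma \ref{aux.Lemma.1} itself rests on the embedding theorem (Theorem \ref{main.embedding}), i.e., on a graded monomorphism $\gamma\colon E\to E(D)$ and an analysis of the components of $\gamma(e_i)$ modulo $J(D)$ — machinery built from the direct-limit construction of Section 2. Your argument short-circuits all of this: graded Wedderburn--Malcev plus Wall's classification reduces to the dichotomy ``all simple summands are $K$ with trivial grading'' versus ``some summand is $K\oplus cK$'', and in the first case $D_1\subseteq J(D)$ clashes with regularity of $D$ applied to an odd tuple of length exceeding the nilpotency index of $J(D)$. This is the same nilpotency-versus-regularity tension that drives the paper's proof of Lemma \ref{aux.Lemma.1}, but you exploit the hypothesis that $D$ itself is regular (which Lemma \ref{aux.Lemma.1} does not assume) to apply it directly, without passing through $E(D)$ or the embedding of $E$. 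The result is a shorter, self-contained proof of $(b)$; what the paper's route buys is reuse of Lemma \ref{aux.Lemma.1}, which it needs anyway, under weaker hypotheses, for Theorem \ref{characterization.variety}. You correctly flag this alternative yourself, so both paths are visible in your write-up.
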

\begin{proof}  Consider the skew-symmetric bicharacter $\beta\colon \mathbb{Z}_{2}\times \mathbb{Z}_{2}\to K^{\ast}$ given by $\beta(0,0)=\beta(0,1)=\beta(1,0)=1$ and $\beta(1,1)=-1$. Given $a$, $a'\in E_{0}$, $e$, $e'\in E_{1}$, $d_{0}$, $d_{0}'\in D_{0}$, and $d_{1}$, $d_{1}'\in D_{1}$, then since $E$ is a $\mathbb{Z}_{2}$-graded regular algebra with skew-symmetric bicharacter $\beta$ and $D$ is commutative, we get
\[
(a\otimes d_{0})(a'\otimes d_{0}')=aa'\otimes d_{0}d_{0}'=a'a\otimes d_{0}'d_{0}=(a'\otimes d_{0}')(a\otimes d_{0}),
\]
\[
(a\otimes d_{0})(e\otimes d_{1})=ae\otimes d_{0}d_{1}=ea\otimes d_{1}d_{0}=(e\otimes d_{1})(a\otimes d_{0}),
\]
\[
(e\otimes d_{1})(e'\otimes d_{1}')=ee'\otimes d_{1}d_{1}'=-e'e\otimes d_{1}'d_{1}=-(e'\otimes d_{1}')(e\otimes d_{1}).
\]
The above calculations show us that $E(D)$ is a graded $\beta$-commutative algebra. Finally, let $N\in \mathbb{N}$ and $(u_{1},\ldots, u_{N})\in \mathbb{Z}_{2}^{N}$. By the regularity of $D$ there exist $d_{1}$, \dots, $d_{N}$, where $d_{i}\in D_{u_{i}}$, $1\leq i\leq N$, such that $d_{1}\cdots d_{N}\neq 0$. On the other hand, by the regularity of $E$, there exist $X_{1}$, \dots, $X_{N}$, with $X_{i}\in E_{u_{i}}$, $1\leq i\leq N$, such that $X_{1}\cdots X_{N}\neq 0$. Then, by Remark \ref{remark.tensor.product} we have
\[
(X_{1}\otimes d_{1})\cdots (X_{N}\otimes d_{N})=(X_{1}\cdots X_{N})\otimes (d_{1}\cdots d_{N})\neq 0.
\]
Therefore $E(D)$ is a $\mathbb{Z}_{2}$-graded regular algebra with skew-symmetric bicharacter $\beta$. Hence, by Lemma \ref{remark.superalgebra} 
\[
T_{\mathbb{Z}_{2}}(E(D))=T_{\mathbb{Z}_{2}}(E).
\] 
This finishes the proof of $(a)$. The proof of $(b)$ follows directly from item $(a)$ and Lemma \ref{aux.Lemma.1}.
\end{proof}

The following lemma shows that the existence of a graded simple subalgebra isomorphic to $K \oplus cK$, with $c^{2} = 1$, in a commutative $\mathbb{Z}_{2}$-graded algebra $C$, is a sufficient condition to ensure $C$ being regular.

\begin{Lemma}\label{aux.example} Let $C$ be a commutative $\mathbb{Z}_{2}$-graded algebra and suppose that $C$ contains at least one graded simple subalgebra isomorphic to $K \oplus cK$, with $c^{2}=1$. Then, $E(C)$ is a $\mathbb{Z}_{2}$-graded regular algebra with minimal regular decomposition.
\end{Lemma}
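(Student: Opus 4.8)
The plan is to reduce the entire statement to the single graded simple subalgebra $W \cong K \oplus cK$ furnished by the hypothesis, since $C$ itself is assumed only commutative and need \emph{not} be regular. First I would dispose of the $\beta$-commutativity, i.e. condition (ii) of regularity. Because $C$ is commutative, the Grassmann envelope $E(C) = C_0 \otimes E_0 \oplus C_1 \otimes E_1$ is automatically $\beta$-commutative for the bicharacter $\beta(0,0)=\beta(0,1)=\beta(1,0)=1$, $\beta(1,1)=-1$: this is precisely the verbatim computation carried out at the start of the proof of Lemma \ref{aux.Lemma.2}(a), which used only the commutativity of the algebra factor together with the fact that $E$ carries the standard Grassmann bicharacter. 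So condition (ii) is immediate.

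The substance lies in condition (i). The key observation is that $W$, graded by $W_0 = K$ and $W_1 = Kc$, is itself a finite-dimensional commutative $\mathbb{Z}_2$-graded regular algebra. Indeed, given any tuple $(u_1,\ldots,u_N)\in \mathbb{Z}_2^N$, put $w_i = 1$ when $u_i = 0$ and $w_i = c$ when $u_i = 1$; then $w_1 \cdots w_N = c^{m}$ with $m = \#\{i : u_i = 1\}$, and since $c^2 = 1$ this equals either $1$ or $c$, in either case nonzero. Hence $W$ satisfies the hypotheses of Lemma \ref{aux.Lemma.2}, and by part (a) of that lemma $E(W)$ is a $\mathbb{Z}_2$-graded regular algebra with minimal regular decomposition.

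Next I would exhibit $E(W)$ as a graded subalgebra of $E(C)$. As $W\subseteq C$ is a graded subalgebra we have $W_0 \subseteq C_0$ and $W_1 \subseteq C_1$, so $E(W) = W_0\otimes E_0 \oplus W_1\otimes E_1$ embeds (injectively, tensoring over a field being exact) into $C_0\otimes E_0 \oplus C_1\otimes E_1 = E(C)$; using $W_1 W_1 \subseteq W_0$, again because $c^2 = 1$, one checks the embedding respects both the product and the grading. Consequently, for any tuple $(u_1,\ldots,u_N)$, the nonzero product produced inside $E(W)$ survives in $E(C)$: choosing $X_i \in E_{u_i}$ nonzero with $X_1\cdots X_N \neq 0$ by the regularity of $E$, and the $w_i$ as above, one has $(X_1\otimes w_1)\cdots(X_N\otimes w_N) = (X_1\cdots X_N)\otimes(w_1\cdots w_N)$, which is nonzero by Remark \ref{remark.tensor.product}. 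This establishes condition (i) for $E(C)$.

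Combining the two conditions, $E(C)$ is a $\mathbb{Z}_2$-graded regular algebra whose bicharacter is the standard Grassmann one, so $M^{E(C)} = M^E$ and $\det M^{E(C)} = \det M^E = -2 \neq 0$, whence the regular decomposition is minimal. The main obstacle, and the exact point where the hypothesis is used, is condition (i): one cannot invoke regularity of $C$, so the nonzero products must be manufactured from the lone subalgebra $W$. The whole difficulty dissolves with the remark that $c^2 = 1$ forces every product of copies of $c$ to be nonzero, making $W$ (and hence $E(W)$) regular, after which regularity passes \emph{upward} to $E(C)$ simply because a nonzero product witnessed inside a subalgebra remains nonzero in the overalgebra.
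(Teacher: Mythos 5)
Your proof is correct and follows essentially the same route as the paper's: both reduce everything to the subalgebra $W\cong K\oplus cK$, observe that its Grassmann envelope is regular (the paper does this by noting $E(W)\cong E$ directly, while you verify $W$ is regular via $c^{2}=1$ and invoke Lemma \ref{aux.Lemma.2}(a) — a negligible difference), derive $\beta$-commutativity of $E(C)$ from the commutativity of $C$, and conclude that nonzero products witnessed in the sub-envelope survive in $E(C)$. If anything, your write-up makes explicit the "products pass upward to the overalgebra" step that the paper leaves implicit.
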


\begin{proof} Denote by $D$ the graded simple subalgebra of $C$ isomorphic to $K\oplus cK$, with $c^{2}=1$. Without loss of generality, we may assume $D = K \oplus cK$. Of course, $E(D)$ is regular because it is isomorphic to $E$. Since clearly $E(C)$ is a $\mathbb{Z}_{2}$-graded $\beta$-commutative algebra, it follows that $E(C)$ is a $\mathbb{Z}_{2}$-graded regular algebra with minimal regular decomposition  and we are done.
\end{proof}

\begin{Theorem}
\label{characterization.variety}
Let $\mathfrak{V}^{gr}=\operatorname{var}^{\mathbb{Z}_2}(E)$ be the graded variety generated by the Grassmann algebra $E$, with its natural $\mathbb{Z}_{2}$-grading. Let $A$ be a $\mathbb{Z}_{2}$-graded algebra, and let $P$ be a finite dimensional $\mathbb{Z}_{2}$-graded algebra such that $T_{\mathbb{Z}_{2}}(A) = T_{\mathbb{Z}_{2}}(E(P))$.  Then the following three conditions are equivalent:
\begin{enumerate}
    \item[(a)] $A$ generates $\mathfrak{V}^{gr}$.
    \item[(b)] $P$ is a $\mathbb{Z}_{2}$-graded regular commutative algebra.
    \item[(c)] $P$ is a commutative $\mathbb{Z}_{2}$-graded algebra, and it contains at least one graded simple subalgebra graded isomorphic to $K \oplus cK$, with $c^{2}=1$.    
\end{enumerate} 

If one (and consequently all) of the above conditions is satisfied, then $A$ is an infinite dimensional  $\mathbb{Z}_{2}$-graded regular algebra with minimal regular decomposition.

\end{Theorem}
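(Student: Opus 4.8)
The plan is to prove the equivalence of (a), (b), (c) by closing the cycle $(a)\Rightarrow(b)\Rightarrow(c)\Rightarrow(a)$, and to treat the concluding paragraph as an immediate corollary of (a). The governing observation is that, by the definition of a generating algebra, condition (a) says precisely that $T_{\mathbb{Z}_2}(A)=T_{\mathbb{Z}_2}(E)$; since by hypothesis $T_{\mathbb{Z}_2}(A)=T_{\mathbb{Z}_2}(E(P))$, this is equivalent to $T_{\mathbb{Z}_2}(E(P))=T_{\mathbb{Z}_2}(E)$. Hence the entire problem reduces to characterizing, in structural terms for the finite-dimensional $P$, when the Grassmann envelope $E(P)$ satisfies exactly the graded identities of $E$. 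All the substantive machinery for this has already been assembled in the preparatory lemmas, so the theorem is essentially their synthesis.

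For $(a)\Rightarrow(b)$ I would start from $T_{\mathbb{Z}_2}(E(P))=T_{\mathbb{Z}_2}(E)$ and run the argument of Lemma \ref{commutative} with $P$ replacing the companion algebra $C$: that argument uses only the equality of $T_{\mathbb{Z}_2}(E(P))$ with $T_{\mathbb{Z}_2}(E)$ together with Remark \ref{remark.tensor.product}, first to force commutativity of $P_0$, of $P_1$, and of $P_0$ with $P_1$, and then to derive regularity of $P$ from the regularity of $E(P)$. This yields (b). For $(b)\Rightarrow(c)$ I would apply Lemma \ref{aux.Lemma.2}(b) with $D=P$: a finite-dimensional commutative regular $\mathbb{Z}_2$-graded algebra must contain a graded simple subalgebra graded isomorphic to $K\oplus cK$ with $c^2=1$, which is exactly (c). For $(c)\Rightarrow(a)$ I would invoke Lemma \ref{aux.example}: the presence of such a $K\oplus cK$ inside the commutative $P$ makes $E(P)$ regular with minimal decomposition, whence $T_{\mathbb{Z}_2}(E(P))=T_{\mathbb{Z}_2}(E)$ by Lemma \ref{remark.superalgebra}(3); combined with $T_{\mathbb{Z}_2}(A)=T_{\mathbb{Z}_2}(E(P))$ this returns $T_{\mathbb{Z}_2}(A)=T_{\mathbb{Z}_2}(E)$, that is (a).

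For the final assertion, once any one of (a), (b), (c) holds we have $T_{\mathbb{Z}_2}(A)=T_{\mathbb{Z}_2}(E)$. I would then apply Lemma \ref{equalitty.T.G} to $R=A$ to conclude that $A$ is $\mathbb{Z}_2$-graded regular with minimal regular decomposition, read off $\dim A=\infty$ from Lemma \ref{remark.superalgebra}(2), and produce the graded copy of $E$ inside $A$ by Theorem \ref{main.embedding}.

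Since the deep steps (Wall's classification and the Wedderburn--Malcev analysis packaged in Lemmas \ref{aux.Lemma.1} and \ref{aux.Lemma.2}, and the direct-limit embedding of Theorem \ref{main.embedding}) are already available, there is no genuinely hard computation left. The one point requiring care, more a bookkeeping subtlety than an obstacle, is that Lemmas \ref{equalitty.T.G}, \ref{commutative}, and \ref{aux.Lemma.1} are phrased in terms of the section's standing algebra $A$ and its companion $C$, whereas here $A$ and $P$ are the generic data of the theorem. I would make explicit that the proofs of those lemmas depend only on the relevant $T_{\mathbb{Z}_2}$-ideal coinciding with $T_{\mathbb{Z}_2}(E)$, so that they transfer without change to the present $A$ and $P$.
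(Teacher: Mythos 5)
Your proposal is correct and follows essentially the same route as the paper: the cycle $(a)\Rightarrow(b)\Rightarrow(c)\Rightarrow(a)$ via Lemmas \ref{commutative}, \ref{aux.Lemma.2}, \ref{aux.example} together with Lemma \ref{remark.superalgebra}, and the final assertion via Lemmas \ref{equalitty.T.G}, \ref{remark.superalgebra} and Theorem \ref{main.embedding}. In fact you are slightly more careful than the paper in making explicit that those lemmas, though phrased for the section's standing algebra $A$ and its companion $C$, depend only on the relevant $T_{\mathbb{Z}_2}$-ideal equalling $T_{\mathbb{Z}_2}(E)$ and hence transfer to the theorem's generic $A$ and $P$.
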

\begin{proof}
If $A$ generates $\mathfrak{V}^{gr}$, by Lemma \ref{equalitty.T.G}, $A$ is a $\mathbb{Z}_{2}$-graded algebra with minimal regular decomposition. 
    By Lemma \ref{commutative} we get that condition $(a)$ implies $(b)$. By Lemma \ref{aux.Lemma.2} we have that $(b)$ implies $(c)$. 
      Finally, by Lemma \ref{aux.example}, it turns out that  $E(P)$ is a $\mathbb{Z}_{2}$-graded regular algebra with minimal regular decomposition. Hence by Lemma \ref{remark.superalgebra} we get $T_{\mathbb{Z}_{2}}(E(P))=T_{\mathbb{Z}_{2}}(E)$, thus $T_{\mathbb{Z}_{2}}(A)=T_{\mathbb{Z}_{2}}(E)$ which proves that $(c)$ implies $(a)$.
The last part of the statement follows directly from Lemma \ref{remark.superalgebra}.
\end{proof}

\section{\texorpdfstring{Remarks on finitely generated subalgebras of $\mathbb{Z}_2$-regular algebras}{Remarks on finitely generated subalgebras of Z2-regular algebras}}

Let $A$ be a $\mathbb{Z}_2$-graded regular algebra with a minimal regular decomposition. Given homogeneous elements $a_1$, \dots, $a_n \in A$,  we denote by $\langle a_1,\ldots,a_n\rangle$ the graded subalgebra of $A$ generated by $\{a_1,\ldots,a_n\}$. Denote by $\textbf{Fin}_A$ the set of all finitely generated graded subalgebras of $A$. This set can be partially ordered by inclusion.

The following remark is basic, yet crucial for what follows.

\begin{Remark}  Let $B = \langle a_1, \dots, a_n \rangle\in \mathbf{Fin}_A$.
\label{Remark.write}
\begin{enumerate} 
\item[(1)] Then $B_0$ is spanned, as a vector space over $K$, by monomials of the form $a_{j_1} \cdots a_{j_m}$ such that $\deg_{\mathbb{Z}_2}(a_{j_1} \cdots a_{j_m}) = 0$. Similarly, $B_1$ is spanned by monomials of homogeneous degree $1$.
\item[(2)] If $\gamma a_{j}+\delta a_{i}\in \langle a_{1},\ldots, a_{i-1},a_{i+1},\ldots,a_{j-1},a_{j+1}\ldots, a_{n}\rangle$, for some $1\leq i,j\leq n$ and $\gamma$, $\delta\in K$ with either $\gamma\neq 0$ or $\delta\neq 0$, then clearly
\[
B= \langle a_{1},\ldots, a_{i-1}, \gamma a_{j}+\delta a_{i},a_{i+1},\ldots,a_{j-1},a_{j},a_{j+1},\ldots, a_{n}\rangle,
\]
 hence  $B= \langle a_{1}, \ldots, a_{i-1},a_{i+1},\ldots,a_{j-1},a_{j},a_{j+1},\ldots, a_{n}\rangle$.
    
\end{enumerate}
 
\end{Remark}

\begin{Theorem}
\label{classification.f.g} 
Given $B \in \mathbf{Fin}_A$ with $B = \langle a_{1},\ldots, a_{n}\rangle$, let $q$ denote the number of generators $a_i$ of homogeneous degree $0$. Then, there exists a finitely generated commutative algebra $C$ (with trivial grading) such that $B$ can be viewed as a right $C$-graded module, $B = LC$, where $L$ is a graded subalgebra of $A$ graded isomorphic to one of the four types of $\mathbb{Z}_{2}$-graded algebras:
\begin{itemize}
    \item (Type I):   $L\cong K$.
    
    \item (Type II): $L \cong E_{n-q}$.
    
    \item (Type III): $L \cong (E_{l}\oplus T_{n-q-l})$, where $T_{n-q-l}= (E_{1}^{+})^{\oplus n-q-l}$ with $l\geq 1$.
    
    \item (Type IV):  There exist integers $r \geq 0$ and $l \geq 2$ such that
    \[
    L\cong E_{l} \oplus (E_{2}^{+})^{\oplus r} \oplus D_{r},
    \]
    where $D_{r}$ is a vector space such that $dd' \in E_{l} \oplus (E_{2}^{+})^{\oplus r}$, for all $d$, $d' \in D_{r}$. Moreover, 
    \[
    \dim D_{r} = (n-q) - l - 2r.
    \]
\end{itemize}
\end{Theorem}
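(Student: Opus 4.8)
The plan is to exploit the minimality of the regular decomposition to pin down the multiplication, and then to split the generators of $B$ according to their homogeneous degree. By Lemma~\ref{remark.superalgebra} the bicharacter of $A$ satisfies $\beta(0,0)=\beta(0,1)=\beta(1,0)=1$ and $\beta(1,1)=-1$; hence every element of $A_0$ is central, while any two odd elements anticommute and every odd element squares to zero. Relabel the generators so that $c_1,\dots,c_q\in A_0$ are the even ones and $b_1,\dots,b_{n-q}\in A_1$ are the odd ones. The even generators are central, so the finitely generated commutative subalgebra $C:=\langle c_1,\dots,c_q\rangle$, equipped with the trivial grading, lies in the centre of $B$. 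First I would show $B=LC$, where $L:=\langle b_1,\dots,b_{n-q}\rangle$: using the centrality of the $c_j$ together with Remark~\ref{Remark.write}, every monomial in the generators can be rewritten, after moving the even factors to the right, as a product of an element of $L$ and an element of $C$. This exhibits $B$ as a right $C$-module of the claimed form and reduces the whole statement to a classification of the subalgebra $L$ generated by the anticommuting, square-zero elements $b_1,\dots,b_{n-q}$.

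The second step is to observe that $L$ is a homomorphic image of the Grassmann algebra $E_{n-q}$: the assignment $e_i\mapsto b_i$ respects the defining relations $e_ie_j+e_je_i=0$, so there is a surjection $E_{n-q}\twoheadrightarrow L$, and after discarding redundant generators via Remark~\ref{Remark.write}(2) we may assume $b_1,\dots,b_{n-q}$ are linearly independent. Thus $L$ is a graded quotient of $E_{n-q}$, and the four types in the statement are to be read as the possible such quotients occurring inside the regular algebra $A$. The natural invariant organising the classification is the multiplicative behaviour of the odd part $V:=\mathrm{span}_K\{b_1,\dots,b_{n-q}\}$: the alternating pairing $V\times V\to L_0$ given by $(u,v)\mapsto uv$, together with the truncation of the higher products. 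I would begin by isolating the subspace of generators that annihilate the whole augmentation ideal $L^{+}$, since these produce the totally degenerate summands $E_1^{+}$ appearing in Type~III.

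The heart of the argument is the case analysis producing the four normal forms, which I would run by induction on $n-q$ (equivalently, on $\dim V$), choosing at each stage a basis of $V$ adapted to the product structure. If $n-q=0$ then $L\cong K$ (Type~I); if no product of distinct $b_i$ vanishes and all such products are linearly independent, then the surjection $E_{n-q}\twoheadrightarrow L$ is injective and $L\cong E_{n-q}$ (Type~II). The remaining, degenerate situations split according to whether the surviving degenerate generators annihilate everything, giving the isolated $E_1^{+}$-pieces of Type~III, or still pair up nontrivially while having vanishing triple products, giving the $E_2^{+}$-pieces of Type~IV. One then extracts a maximal clean Grassmann core $E_l$, collects the nontrivial pairs into the summand $(E_2^{+})^{\oplus r}$, and gathers whatever is left into the residual space $D_r$, which by construction satisfies $D_rD_r\subseteq E_l\oplus(E_2^{+})^{\oplus r}$; the dimension formula $\dim D_r=(n-q)-l-2r$ is then a bookkeeping count of the generators.

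I expect the main obstacle to be exactly this last case analysis: proving that the four listed types are \emph{exhaustive} and compatible with the direct-sum (block) structure, and in particular building a basis of $V$ that simultaneously exhibits the clean core, the annihilating generators, and the truncated pairs. The delicate point is that $L_0$ is not a field but a commutative algebra with nilpotents (already $(e_1e_2)^2=0$ in $E$), so the alternating pairing $V\times V\to L_0$ cannot simply be reduced to a symplectic normal form over $K$; one must instead control how the longer products interact and verify that the residual module $D_r$ genuinely multiplies into $E_l\oplus(E_2^{+})^{\oplus r}$. Tracking the grading throughout, and checking that each normal form is actually realisable as a graded subalgebra of the ambient regular algebra $A$, is where I anticipate the bookkeeping to be heaviest.
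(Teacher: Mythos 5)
Your first stage is correct and is exactly the paper's: by Lemma~\ref{remark.superalgebra} the even component is central while odd elements anticommute and square to zero, so every monomial in the generators rewrites (up to sign) as a product of distinct odd generators times a monomial in the even ones, giving $B=LC$ with $C=K[a_{1},\dots,a_{q}]$ and $L=\langle a_{q+1},\dots,a_{n}\rangle$. Your observation that $L$ is a graded quotient of $E_{n-q}$ is also fine and is implicit in the paper.

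The genuine gap is that the classification of $L$ into the four types, which is the actual content of the theorem, is never carried out: you state the intended normal forms and then explicitly defer exhaustiveness, the directness of the sums, and the dimension count as ``the main obstacle'' and the place where ``the bookkeeping is heaviest''. That deferred part is precisely what the paper proves. Concretely, the paper sets $l:=\max\{k\mid a_{j_{1}}\cdots a_{j_{k}}\neq 0,\ j_{1},\dots,j_{k}\in\{q+1,\dots,n\}\}$ and splits into three cases: $l=n-q$ gives Type~II, where linear independence of all products is not a hypothesis (as in your formulation of this case) but is \emph{derived} by the argument of Lemma~\ref{Grassmann.finite}, multiplying a minimal dependence by a generator occurring in some but not all of its terms; $l=1$ gives Type~III; and for $1<l<n-q$ one fixes $\Lambda_{l}\cong E_{l}$ spanned by a maximal nonzero product and iterates over the remaining odd generators, at each step either finding a pair $(u,v)$ whose product is nonzero and lies outside the sum built so far, producing an $E_{2}^{+}$ summand, or collecting what is left into $D_{r}$. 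The step you leave unproved is the directness of these sums, e.g.\ $U_{2}'\cap\Lambda_{l}=0$; the paper obtains it from the normalization imposed at the outset via Remark~\ref{Remark.write}(2), namely $\operatorname{Span}_{K}\{a_{i},a_{j}\}\cap\langle a_{t}\mid t\neq i,j\rangle=0$, whereas your proposal invokes Remark~\ref{Remark.write}(2) only to make the generators linearly independent, which is strictly weaker and insufficient for the direct-sum claims. Finally, your worry about reducing the alternating pairing $V\times V\to L_{0}$ to a symplectic normal form is a red herring: the paper's argument needs no such reduction, since it works directly with products of the chosen (normalized) generators. As written, your proposal establishes $B=LC$ but only conjectures the four-type classification of $L$.
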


\begin{proof}
Let $B\in \textbf{Fin}_A$ with $B=\langle a_{1},\ldots,a_{n}\rangle$. Assume without loss of generality $a_{1}$, \dots, $a_{q}\in A_{0}$, for some $q\geq 1$, and $a_{q+1}$, \dots, $a_{n}\in A_{1}$. Moreover, by Remark \ref{Remark.write}, we can assume for all $1\leq i,j\leq n$ 
\[
\operatorname{Span}_{K}\{a_{i},a_{j}\}\cap  \langle a_{1},\ldots, a_{i-1},a_{i+1},\ldots,a_{j-1},a_{j+1}\ldots, a_{n}\rangle=0. 
\]
Notice that for any $r \in \mathbb{N}$ and $i_{1}$, \dots, $i_{r} \in \{1, \ldots, n\}$, there exists a monomial $M(a_{1}, \ldots, a_{q})$ in $K[a_{1}, \ldots, a_{q}]$ and indices $q+1 \leq \lambda_{1} < \lambda_{2} < \cdots < \lambda_{s} \leq n$ such that
\begin{equation}
\label{key.main}
    a_{i_{1}}\cdots a_{i_{r}}= (a_{\lambda_{1}}\cdots a_{\lambda_{s}})M(a_{1},\ldots,a_{q}).
\end{equation}
Therefore, it follows that $B=LC$, where $C:=K[a_{1},\ldots,a_{q}]$ and $L:=\langle a_{q+1},\ldots, a_{n}\rangle$.

We have the following cases:

\begin{enumerate}
    \item Suppose $q=n$. In this case, $a_{1}$, \dots, $a_{n}\in A_{0}$. Hence $B$ is isomorphic to $C:=K[a_{1},\ldots, a_{n}]$ with the trivial grading and $L\cong K$. 
    \item Suppose $q<n$. Set
    \[
    l:=\max\Big\{k\mid a_{j_{1}}\cdots a_{j_{k}}\neq 0,\quad j_{1},\ldots, j_{k}\in \{q+1,\ldots, n\}\Big\},
    \]
     then consider $k_{1}$, \dots, $k_{l}\in \{q+1,\ldots,n\}$ such that $a_{k_{1}}\cdots a_{k_{l}}\neq 0$.  Without loss of generality we can assume $k_{1}<\cdots<k_{l}$ and we will write $c_{1}:=a_{k_{1}}$, \dots, $c_{l}:=a_{k_{l}}$.   
    \begin{enumerate}
        \item[(i)] Suppose $l=n-q$.  By the same arguments used in Section 2, $c_{1}$, \dots,  $c_{n-q}$ are linearly independent and the graded subalgebra generated by $c_{1}$, \dots, $c_{n-q}$ is graded isomorphic to $E_{n-q}$, the Grassmann algebra on $n-q$ generators. It follows that $L$ is graded isomorphic to $E_{n-q}$. 
        \item[(ii)] If $l=1$, then, $L=(K1\oplus Ka_{q+1})\oplus Ka_{q+2}\oplus \cdots \oplus Ka_{n}$ and $a_{i}a_{j}=0$, for all $q+1\leq i\leq n$. In other words, $L$ is graded isomorphic to $E_{1}\oplus T_{n-q-1}$, where $T_{n-q-1}:=(E_{1}^{+})^{\oplus n-q-1}$, and the multiplication between elements from different summands in $T_{n-q-1}$ is zero.
       
        \item[(iii)] Suppose $1<l<n-q$ and $l=2$. In this case $a_{u}a_{v}a_{w}=0$ for any $u$, $v$, $w\in \{q+1,\ldots, n\}$, and $\Lambda_{2}:=\langle a_{k_{1}},a_{k_{2}}\rangle$ is graded isomorphic to $E_{2}$.  
        Consider
        \[
    l':=\max\Big\{s\mid a_{r_{1}}\cdots a_{r_{s}}\neq 0,\quad r_{1},\ldots, r_{s}\in \{q+1,\ldots, n\}\setminus\{k_{1},k_{2}\}\Big\};
        \]
        obviously, $l'\in \{1,2\}$.  If $l'=1$, then, the remaining elements of $\{a_{q+1}, \ldots, a_{n}\} \setminus \{a_{k_{1}}, a_{k_{2}}\}$ are elements that annihilate each other, that is, they generate $T_{n-q-2}$, and, by  assumption, we have $T_{n-q-2} \cap \Lambda_{2}= 0$.  In this case we obtain 
        \[
        L\cong E_{2}\oplus T_{n-q-2}.
        \] 
        Suppose $l'=2$. Let $\mathbf{I}:=(\{q+1,\ldots,{n}\}\setminus \{k_{1},k_{2}\})$. If for every $(u,v)\in \mathbf{I}^{2}$ with $u<v$ we have either $a_{u}a_{v}=0$ or $a_{u}a_{v}\in \Lambda_{2}$, then the vector space $D_{1}:=\bigoplus_{i\in \mathbf{I}} Ka_{i}$ satisfies the following property: for all $d,d'\in D_{1}$, one has $dd'\in \Lambda_{2}$. In this case
        \[
        L\cong E_{2}\oplus D_{1}
        \]
        and $\dim D_{1}=(n-q)-2$. Now, suppose that there exists a pair $(k_{1}',k_{2}')\in \mathbf{I}^{2}$, $k_{1}'<k_{2}'$, such that $0\neq a_{k_{1}'}a_{k_{2}'}\notin \Lambda_{2}$. In this case, notice that if for some $\gamma_{1}$, $\gamma_{2}$, $\gamma_{3}\in K$, $\gamma_{1}a_{k_{1}'}+\gamma_{2}a_{k_{2}'}+\gamma_{3}a_{k_{1}'}a_{k_{2}'}\in \Lambda_{2}$, then  since $\Lambda_{2}$ is a graded subalgebra, $\deg(a_{i})=1$, for $i\in \{k_{1}',k_{2}'\}$ and $\deg(a_{k_{1}'}a_{k_{2}'})=0$, we would have $a_{k_{1}'}a_{k_{2}'}\in \Lambda_{2}$. Hence $\gamma_{3}=0$ and $\gamma_{1}a_{k_{1}'}+\gamma_{2}a_{k_{2}'}\in \Lambda_{2}$. Therefore
        \[
        \gamma_{1}a_{k_{1}'}+\gamma_{2}a_{k_{2}'}\in \langle a_{1},\ldots, a_{k_{1}'-1},a_{k_{1}'+1},\ldots, a_{k_{2}'-1},a_{k_{2}'+1},\ldots, a_{n}\rangle,
        \]
        which implies $\gamma_{1}=\gamma_{2}=0$. It follows that the subalgebra $U_{2}':=\operatorname{Span}_{K}\{a_{k_{1}'},a_{k_{2}'},a_{k_{1}'}a_{k_{2}'}\}$, which is graded isomorphic to $E_{2}^{+}$, is such that
        \[
        U_{2}'\cap \Lambda_{2}=0.
        \]
         We repeat the procedure described above: we find a pair of elements $(u,v)\in (\mathbf{I}\setminus\{k_{1}',k_{2}'\})^{2}$ such that $u<v$ and $0\neq a_{u}a_{v}\notin (\Lambda_{2}\oplus U_{2}')$. If no such pair exists, we construct a vector space $D'$ such that for all $f$, $f'\in D'$, one has $ff'\in \Lambda_{2}\oplus U_{2}'$. Eventually, we get that there exists $r\geq 0$ such that 
\[
L\cong E_{2}\oplus (E_{2}^{+})^{\oplus r}\oplus D_{r+1},
\]
where $D_{r+1}$ has dimension $(n-q)-2(r+1)$ and has the property that for all $d_{1}$, $d_{2}\in D_{r+1}$, $d_{1}d_{2}\in E_{2}\oplus (E_{2}^{+})^{\oplus r}$.  

\item[(iv)] Suppose $1<l<n-q$ and $l\geq 2$.  Analogously to (i) we get a graded subalgebra $\Lambda_{l}$ which is graded isomorphic to $E_{l}$.  Consider 
\[
l':=\max\Big\{s\mid a_{r_{1}}\cdots a_{r_{s}}\neq 0,\quad r_{1},\ldots, r_{s}\in \{q+1,\ldots, n\}\setminus\{k_{1},\ldots, k_{l}\}\Big\}.
\]
If $l'=1$, then $L\cong (E_{l}\oplus T_{n-q-l})$.

If $l'\geq 2$, consider the set $\mathbf{J}=\{q+1,\ldots,n\}\setminus \{k_{1},\ldots, k_{l}\}$. If for all $(u,v)\in \mathbf{J}^{2}$, with $u<v$, we have $a_{u}a_{v}\in \Lambda_{l}$, then 
\[
L\cong E_{l}\oplus \widehat{D}_{1},
\]
where $\widehat{D}_{1}$ is a vector space, with dimension $\dim \widehat{D}_{1}= (n-q)-l$ such that for all $d$, $d'\in \widehat{D}_{1}$, $dd'\in E_{l}$. Suppose there exists a pair $(u,v)\in \mathbf{J}^{2}$, $u<v$, satisfying $0\neq a_{u}a_{v}\notin E_{l}$. The graded subalgebra $W_{2}':= \operatorname{Span}_{K}\{a_{u},a_{v},a_{u}a_{v}\}$, which is graded isomorphic to $E_{2}^{+}$, satisfies $W_{2}'\cap \Lambda_{l}=0$. Then we consider
    \[
        \max\Big\{s\mid a_{r_{1}}\cdots a_{r_{s}}\neq 0,\quad r_{1},\ldots, r_{s}\in \{q+1,\ldots, n\}\setminus\{k_{1},\ldots, k_{l},u,v\}\Big\}
    \]
 and, as in (iii), there exists $r\geq 0$ such that 
\[
L\cong E_{l}\oplus (E_{2}^{+})^{\oplus r}\oplus \widehat{D}_{r}, 
\]
where $\widehat{D}_{r}$ is a vector space of dimension $(n-q)-l-2r$ such that for all $d$, $d'\in \widehat{D}_{r}$, we have $dd'\in E_{l}\oplus (E_{2}^{+})^{\oplus r}$. \qedhere
    \end{enumerate}
\end{enumerate}
\end{proof}

The next result establishes a connection with Proposition \ref{family.regular} on regular gradings constructed from $m$-regular gradings via direct limits.

\begin{Corollary}
The $\mathbb{Z}_{2}$-graded regular algebra $A$ is the direct limit of $\mathbb{Z}_{2}$-graded algebras, each of which is $k$-regular for some $k \in \mathbb{N}$ (where $k$ may vary for different algebras).
\end{Corollary}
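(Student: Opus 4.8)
The plan is to exhibit $A$ as the directed union of a cofinal family of finitely generated graded subalgebras, every one of which is $k$-regular for a suitable $k\ge 1$; the directed union of subobjects is precisely their direct limit. Two elementary observations set this up. Since $A$ is regular, condition (i) of Definition \ref{def.regular.gradings} applied to the $1$-tuple $(1)\in\mathbb{Z}_2$ yields a nonzero $a\in A_1$; fix such an $a$. Moreover, every graded subalgebra $B\subseteq A$ is automatically $\beta$-commutative, since the defining relations $r_gr_h=\beta(g,h)r_hr_g$ hold for all homogeneous elements of $A$, in particular for those of $B$.

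I would then consider the family
\[
\mathcal{F}:=\{B\in\mathbf{Fin}_A\mid B_1\ne 0\}.
\]
Each $B\in\mathcal{F}$ is $k$-regular for some $k\ge 1$. Indeed $B$ is $\beta$-commutative by the remark above, and Theorem \ref{classification.f.g} describes $B=LC$ with $L$ of Type II, III or IV (Type I is excluded because $B_1\ne 0$), so $L$ contains a copy of some $E_l$ with $l\ge 1$. Given a $j$-tuple of degrees with $j\le l$, filling its degree-$0$ entries with the unit and its degree-$1$ entries with distinct anticommuting generators of this $E_l$ produces a nonzero product; hence $B$ is $l$-regular. (Only $k\ge1$ is needed, and $B_1\ne0$ already forces $1$-regularity; the precise value $k=l$ depends on $B$, which is why $k$ varies.)

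Next I would verify that $\mathcal{F}$ is directed by inclusion and exhausts $A$. For directedness, given $B',B''\in\mathcal{F}$ generated by finite sets of homogeneous elements, the graded subalgebra generated by the union of these sets again lies in $\mathbf{Fin}_A$, contains $B'$ and $B''$, and has nonzero degree-$1$ part, so it belongs to $\mathcal{F}$. For exhaustion, any $x\in A$ writes as $x=x_0+x_1$ with $x_0\in A_0$, $x_1\in A_1$, and the subalgebra $\langle x_0,x_1,a\rangle$ is finitely generated, contains $x$, and has $a\ne0$ in its degree-$1$ part; thus it lies in $\mathcal{F}$ and $\bigcup_{B\in\mathcal{F}}B=A$. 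Since the connecting maps of $\mathcal{F}$ are the inclusions of graded subalgebras, the direct limit of this directed system is its union, by Lemma \ref{graded.algebra} together with the standard theory of direct limits \cite{rotman2009introduction}, giving
\[
A=\varinjlim_{B\in\mathcal{F}}B,
\]
a direct limit of $\mathbb{Z}_2$-graded algebras each $k$-regular for some $k\in\mathbb{N}$.

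The step requiring the most care is not a computation but the choice of $\mathcal{F}$: one must restrict to subalgebras with nonzero odd component, since a purely even (Type I) finitely generated subalgebra fails to be $k$-regular for any $k\ge 1$. The work then lies in confirming that this restricted family is still cofinal in $\mathbf{Fin}_A$ (so that the union remains all of $A$) and closed under finitely generated graded joins (so that it is directed), with Theorem \ref{classification.f.g} supplying the concrete regularity level of each member.
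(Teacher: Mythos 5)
Your proof is correct, and it shares the paper's skeleton: present $A$ as a direct limit of finitely generated graded subalgebras and use Theorem \ref{classification.f.g} to certify the $k$-regularity of the terms. But it diverges in one substantive and, in fact, necessary way. The paper takes the limit over the whole of $\mathbf{Fin}_A$ and asserts that every $B=LC\in\mathbf{Fin}_A$ is $k$-regular for some $k$; taken literally this fails for Type~I members, since there $B\subseteq A_0$, so $B_1=0$ and $B$ is not even $1$-regular (the $1$-tuple $(1)\in\mathbb{Z}_2$ has no nonzero witness, and being $k$-regular for $k\in\mathbb{N}$ requires one). Your restriction to the subfamily $\mathcal{F}$ of finitely generated graded subalgebras with nonzero odd component removes exactly these degenerate terms, and your two verifications --- that $\mathcal{F}$ is closed under finitely generated joins (directedness) and that it exhausts $A$ via a fixed $0\neq a\in A_1$ supplied by condition (i) of Definition \ref{def.regular.gradings} (cofinality) --- guarantee that the limit over $\mathcal{F}$ is still all of $A$. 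Your explicit regularity count (a copy of $E_l$, $l\geq 1$, inside $L$ of Type~II, III or IV, with the unit filling the even slots of a tuple) also substantiates what the paper dismisses as ``straightforward to verify.'' In short, both arguments prove the same presentation of $A$, but yours is the watertight version: the corollary's statement survives either way, while the paper's own proof, as written, lets non-$k$-regular (purely even) subalgebras into the directed system, and your cofinality argument is precisely the repair that is needed.
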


\begin{proof}
It is easy to see that $\mathbf{Fin}_{A}$ is a directed set; that is, for all $B$, $B' \in \mathbf{Fin}_{A}$ there exists $B'' \in \mathbf{Fin}_{A}$ such that $B \subseteq B''$ and $B' \subseteq B''$. Hence, it follows that  $A = \varinjlim_{\mathbf{Fin}_{A}} B$. Given $B \in \mathbf{Fin}_{A}$, by Theorem \ref{classification.f.g} we can write 
\[
B = LC = (L)_{0}C \oplus (L)_{1}C,
\]
where $L$ is of Type I, II, III or IV. It is straightforward to verify that every such $B$ is a $\mathbb{Z}_{2}$-graded $k$-regular grading, for some $k \in \mathbb{N}$.
\end{proof}

\begin{Remark}
 In light of Theorem \ref{characterization.variety}, it follows that if $W$ generates the variety $\mathfrak{V}^{gr}$, then its finitely generated graded subalgebras are of the form $RC$, where $C$ is a finitely generated commutative subalgebra of $W$, and $R$ is of Type I, II, III or IV.

\end{Remark}

\section*{Statements and Declarations}
The authors have no relevant financial or non-financial interests to disclose.

No data are associated with this research.

\section*{Acknowledgments}
We thank the referee for his/her careful reading of the first version of the paper. The referee's comments have been incorporated into the manuscript, resulting in a significant improvement of the presentation. We point out that Example~\ref{example_referee} was included in the paper as a consequence of one of the referee's remarks.

\end{document}